\documentclass[10pt]{amsart}
\usepackage{t1enc}
\usepackage{amsfonts,amsmath,amssymb,amsthm}

\setcounter{section}{0}
\setcounter{subsection}{0}

\newtheorem{theorem}{Theorem}[section]
\newtheorem{lemma}[theorem]{Lemma}
\newtheorem{proposition}[theorem]{Proposition}
\newtheorem{corollary}[theorem]{Corollary}
\theoremstyle{definition}
\newtheorem*{definition}{Definition}
\theoremstyle{remark}

\numberwithin{equation}{section}

\newcommand{\CC}{\mathbb C}

\newcommand{\RR}{{\mathbb  R}}
\newcommand{\BB}{{\mathbf  B}}
\newcommand{\RRplus}{{\RR}_+} 
\newcommand{\RRn}{{\RR}^n}
\newcommand{\bS}{{\mathbb S}}
\newcommand{\prn}{\RRn \times \RRn}

\newcommand{\ep}{\varepsilon}
\newcommand{\hake}[1]{\langle #1 \rangle }
\newcommand{\scalar}[2]{\langle #1, #2 \rangle}

\newcommand{\norm}[1]{\Vert #1 \Vert }
\newcommand{\normrum}[2]{{\norm {#1}}_{#2}}
\newcommand{\upp}[1]{^{(#1)}}
\newcommand{\p}{\partial}
\newcommand{\ee}{\text{\rm e}}
\newcommand{\ii}{\text{\rm i}}
\newcommand{\dd }{\,\text{\rm d}}
\newcommand{\opn}{\operatorname}

\newcommand{\Cal}{\mathcal}


\textwidth14.5cm
\oddsidemargin.7cm
\evensidemargin.7cm

\title{Local smoothing for the backscattering transform}

\author[Ingrid Belti\c t\u a]{Ingrid Belti\c t\u a $^*$}
\thanks{$^*$ Partially supported by 
SPECT Short Visit Grant 1006 and the grant 2-CEx05-11-23/2005}
\address{Institute of Mathematics "Simion Stoilow"
of the Romanian Academy, PO Box 1-764, RO 014700 Bucharest, Romania}
\email{Ingrid.Beltita@imar.ro} 
\author[Anders Melin]{Anders Melin}
\address{Lund University,  Sweden}
\email{andersmelin@hotmail.com}

\begin{document}
\parskip=6pt
\baselineskip=16.5pt
\bigskip

\maketitle
\begin{abstract}
 An analysis  of the backscattering data for the Schr\"odinger 
 operator in odd dimensions $n\ge 3$ 
 motivates the introduction of the backscattering transform  
 $B: C_0^\infty ({\mathbb R}^n;{\mathbb C} )\to
C^\infty ({\mathbb R}^n; {\mathbb C})$. 
This is an entire analytic mapping 
and we write
$
Bv = \sum _1^\infty B_Nv
$
where $B_Nv$ is the $N$:th order term in  the power series  
expansion at $v=0$. In this paper we study
 estimates for $B_Nv$  in $H_{(s)}$ spaces, 
and prove that $Bv$ is entire
analytic in $v \in H_{(s)}\cap \Cal E'$ when $s\ge (n-3)/2$.
\end{abstract}

\section{Introduction}

The present note is devoted to proving
continuity and smoothing properties of the  backscattering transform
for the Schr\"odinger operator in odd dimensions $n >1$.

In order to state the main result a brief description of the
mathematical objects involved is necessary. (The reader is referred to 
\cite{M:contemp}, \cite{M:rims}, \cite{AM99} for details.) 

Consider the Schr\"odinger operator $H_v =-\Delta +v$ in $\RRn$, where
$v \in L^2_{\opn{cpt}} (\RR^n)$. Assume that $H_v$ with domain $H_{(2)}(\RR^n)$ is
self-adjoint and the wave operators
$$
W_{\pm} = \lim _{t\to \pm \infty}  e^{\ii tH_v}e^{-\ii tH_0}
$$
exist. 
Then the operator $v W_+$ is continuous
from $L^2$ to $L^1$, and therefore its distribution kernel 
$ v(x) W_{+} (x, y)$ is defined. 
After composing 
it
with a non-singular linear transformation,   we arrive at
the distribution $v(x-y)W_+(x-y,x+y)$ in $\Cal D'(\prn )$. Since $v$ is
compactly supported we may integrate with respect to
 $y$ and  obtain   the distribution
$$
2^n \int v(x-y) W_+(x-y,x+y) \dd y.
$$
(The normalization factor here is introduced in order that the
expression above  should be equal to $v(x)$ when $W_+$ is replaced by
the identity.)
It was proved in \cite{M:contemp} that when $v\in C_0^\infty(\RR^n; \RR)$ the integral 
above represents 
the inverse Fourier transform of the  backscattering part of the
scattering  matrix, when this is represented as a function in the
momentum variables. 
The real part of the expression above  is equal to
$$
{\beta}v(x)=2^n \int v(x-y)W(x-y,x+y) \dd y,
$$
where the operator $W=(W_++W_-)/2$ has a real-valued distribution
kernel.

The  backscattering transform $Bv $ of $v\in C_0^\infty(\RR^n; \RR)$ is
a slight  modification of  ${\beta}v$. 
Let $K_v(t)$ be the wave group associated to the operator
$$
\Box_v =\p_t^2 -\Delta_x + v,
$$
i.e.,  $u(x, t)= (K_v(t) f)(x)$ is, for every $f \in C_0^\infty(\RRn)$, 
the unique solution in $C^1([0, \infty), L^2(\RRn))$ to the
Cauchy problem
$$ \Box_v u(x, t)=0, \quad u(x, 0) = 0, \quad (\p_t u)(x, 0) = f(x).
$$ 
Then $K_v(t)$ is a strongly continuous function of $t$ with values in 
the space of bounded linear operators on $L^2(\RRn)$.
(See \cite{M:rims} for details.) 
We have that $|x-y|\le t$ in the support of $K_v(x, y; t)$ and 
$|x-y|=t$ in  the support of $K_0(x, y; t)$. 
This ensures that the operator 
$$ G=- \int\limits_0^\infty K_v(t) v \dot{K}_0(t) \dd t$$
is well-defined and continuous on $L^2_{\opn{cpt}}(\RRn)$, where the dot denotes
derivative in the variable $t$.
Theorem~7.1 in \cite{M:contemp} gives the relation between $G$ and $W$ above:
There exist an orthonormal basis
$(f_j)_{1\leq j \leq \mu} $  of real eigenfunctions corresponding to
the  negative part of the spectrum of $H_v$ and a set $(g_j)_{1\leq j
  \leq {\mu}}$ of smooth real-valued functions such that 
$$
W= I +G + \sum _1 ^{\mu} f_j \otimes g_j.
$$
It turns out (see below) that $G=G_v$, considered as function of $v$ with values in the space of 
continuous linear
operators in $L_{\opn{cpt}}^2 (\RR^n)$, extends to an entire analytic function of 
$v\in C_0^\infty(\RR^n)$, i.e., to the space of complex-valued $v$ in $C_0^\infty$.
Also, if $v$ is sufficiently small (in a sense that we do not make precise here),
there are no bound states and  $W=I+G$ then.
For these reasons it is natural to modify the definition of $\beta v$ by subtracting 
the contribution from 
$\sum _1 ^{\mu} f_j \otimes g_j$.

\begin{definition}
Assume $v\in C_0^\infty (\RRn; \CC)$.
The backscattering transform $Bv$ of $v$ is defined
by
$$(Bv)(x) = v(x) + 2^n \int\limits v(x-y) G(x-y, x+y)\dd y. $$
Here the integral is taken in distribution sense and $v(x)G(x, y)$ is the distribution 
kernel of the operator $v G$.
\end{definition}   

 It was proved in \cite{M:rims} that $G=G_v$  extends to an  entire analytic  function of $v \in
 L_{\opn{cpt} }^q(\RRn)$
when $q>n$.  For such $v$ we  can define $Bv$ again as in the previous  definition and
 $Bv$  will be entire analytic in $v$ with values in $\Cal D'(\RRn)$. 
 We write
$$
Bv = \sum _1^\infty B_Nv
$$
where $B_Nv$ is the $N$:th order term in  the power series  expansion at $v=0$.
There are  other spaces of $v$  (containing $C_0^\infty$ as a dense subset)  to which
 $Bv$  can be extended analytically. 
For reasons of continuity such expansions  can be studied by deriving
 estimates for the $B_Nv$ when $v\in C_0^\infty$.  
In this paper we shall study
 estimates for $B_Nv$  in $H_{(s)}$ spaces, 
and prove that $Bv$ is entire
analytic in $v \in H_{(s)}\cap \Cal E'$ when $s\ge (n-3)/2$.

We recall some basic ingredients in the construction of $Bv$  when $v\in C_0^\infty(\RR^n)$.
We recall from \cite{M:rims}, or section 11 in \cite{M:contemp},
that 
\begin{equation}\label{bn:1}
K_v(t) =\sum\limits_{N\ge 0} (-1)^N K_N(t),
\end{equation}
where $K_N $
are inductively defined by
\begin{equation}\label{bn:2}
\begin{gathered}
K_0(t) = \frac{\sin t|D|}{|D|},
\\
K_N(t) = ( K_{N-1} \ast v K_0)(t) =\int\limits_0^t K_{N-1}(s) v K_0(t-s) \dd s, \quad N\ge 1.
\end{gathered}
\end{equation}
One has the estimate
$$
\normrum{K_N(t)}{L^2\to L^2} \leq \normrum v{L^\infty}^N t^{2N+1}/(2N+1)!
$$
Since the distribution kernel  $K_N(x,y;t)$ of $K_N(t)$ is supported
in the set where $|x-y|\leq t$, it makes sense to consider 
\begin{equation}\label{bn:4}
G_N =(-1)^{N} \int _0 ^\infty K_{N-1}(t)v\dot K _0(t) \dd t.
\end{equation}
This is a continuous linear operator in $L_{\opn{cpt}}^2(\RRn)$, 
and the estimates for the $K_N$ show that
$$
G = \sum _1^\infty  G_N
$$
is an entire
analytic  function of $v$. 
We see that
\begin{equation}\label{bn:5}
(B_{N} v)(x) = 2^{n}\int v(y) G_{N-1}(y, 2x-y) \dd y, \quad N\ge 2.
\end{equation}

The following theorem (Theorem~8, \cite{M:rims}) 
reveals the smoothing properties of $B_N$ for large $N$.

\begin{theorem}\label{cor:rims}
Let $q>n$ and $k$ be a nonnegative integer.
Then there is a positive integer $N_0=N_0(n, q, k)$ such that
$\Delta^k B_N v \in L^2_{\opn{loc}}(\RRn)$ when $v\in L^q(\RRn)$ has compact support and $N\ge N_0$.
Moreover, if $R_1, R_2> 0$, there is a constant $C$, depending on $n$, $k$, $R_1$, $R_2$ 
and $q$ only such that
$$ \normrum{ \Delta^k B_N v }{L^2(B(0, R_1))} \le C^N \normrum{v}{L^q}^N/N!, \qquad N\ge N_0,$$
whenever $v\in L^q(\RRn)$ has support in the ball $B(0, R_2)$.
\end{theorem}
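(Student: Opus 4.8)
The plan is to pass to the Fourier side, where the twisted‑trace structure of $B_Nv$ makes it the $N$‑th Born term of the backscattering amplitude — an iterated free resolvent sandwiched between $N$ copies of $v$ — and then to read off the regularity from the decay in frequency of that iterate, which improves by a fixed positive amount for every extra factor, so that any prescribed number of derivatives is reached once $N$ is large.

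First I would unwind the definitions. Combining \eqref{bn:5}, \eqref{bn:4} and \eqref{bn:2}, and merging the time integral in $G_{N-1}$ with the simplex integrals implicit in $K_{N-2}$, gives for $v\in C_0^\infty$ (the general $L^q$ case following by density once the bound is in hand) and $N\ge2$
$$
(B_Nv)(x)=2^n(-1)^{N-1}\!\!\int\limits_{\{\tau_i>0\}}\!\!\bigl[\,v\,K_0(\tau_1)\,v\cdots v\,K_0(\tau_{N-1})\,v\,\dot K_0(\tau_1+\cdots+\tau_{N-1})\,\bigr](y,2x-y)\,\dd y\,\dd\tau ,
$$
the bracket denoting the distribution kernel of the indicated composition. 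The essential point is that $y$ appears both in the output slot and in the argument of the last factor: this is not a plain operator composition but the \emph{closing up} of the chain from $y$ to $2x-y$, and on the Fourier side it becomes a restriction to a diagonal, which is where the smoothing originates. Indeed, $\widehat{B_Nv}(\xi)$ equals, up to a constant, the value at $\zeta_1=\zeta_2=\xi/2$ of the Fourier kernel of the chain operator $v\,K_0(\tau_1)v\cdots vK_0(\tau_{N-1})v\dot K_0(\Sigma)$; and carrying out the $\tau$‑integrations converts the product of wave symbols $\prod_i\frac{\sin\tau_i|\eta_i|}{|\eta_i|}\cdot\cos(\Sigma|\xi|/2)$ — with the outgoing regularisation forced by integrating from $t=0$ — into a product of free resolvent symbols, yielding
$$
\widehat{B_Nv}(\xi)=c_n\int\hat v\bigl(\tfrac\xi2-\eta_1\bigr)\Bigl[\prod_{j=1}^{N-1}\frac1{|\eta_j|^2-|\xi|^2/4-\ii0}\Bigr]\Bigl[\prod_{j=1}^{N-2}\hat v(\eta_j-\eta_{j+1})\Bigr]\hat v\bigl(\eta_{N-1}-\tfrac\xi2\bigr)\,\dd\eta_1\cdots\dd\eta_{N-1},
$$
the $N$‑th Born approximation $\hat v*R_0(|\xi|/2)*\hat v*\cdots*\hat v$ of the backscattering amplitude.

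Next I would estimate the decay in $|\xi|$. Writing $\lambda=|\xi|/2$ and rescaling $\eta_j=\lambda\omega_j$ extracts a factor $\lambda^{(n-2)(N-1)}$ from the Jacobian and the $N-1$ resolvent symbols (each becoming the $\lambda$‑independent $(|\omega_j|^2-1-\ii0)^{-1}$), leaving an $\omega$‑integral with $N$ potential factors $\hat v(\lambda(\,\cdot\,))$. Bounding it is a multilinear resolvent estimate: the product $\prod_j(|\omega_j|^2-1-\ii0)^{-1}$ lies in no $L^p(\RRn)$, so Young's inequality is useless, and one must instead exploit the genuine dispersive behaviour of the free resolvent $R_0(\lambda)$ — limiting absorption in weighted $L^2$, and restriction/$L^p$–$L^{p'}$ bounds, equivalently the local smoothing that gives the paper its title — to bound the $\omega$‑integral by a geometric constant $C^N$ times a product of $L^{q'}$ norms of the $\hat v(\lambda(\,\cdot\,))$, each $\sim\lambda^{-n/q'}\normrum v{L^q}$ by Hausdorff–Young ($q'=q/(q-1)$). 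Since $q>n$ forces $q'<n/(n-1)$, hence $n/q'>n-2$, the net exponent $(n-2)(N-1)-N\,n/q'=-(2-n/q)N-(n-2)$ of $|\xi|$ tends to $-\infty$ as $N\to\infty$; the accompanying constant is $\le C^N\normrum v{L^q}^N/N!$, the factor $1/N!$ (in truth a much faster decay in $N$) being recoverable from the time‑domain bound $\normrum{K_N(t)}{L^2\to L^2}\le\normrum v{L^\infty}^Nt^{2N+1}/(2N+1)!$ recalled above, the relevant times being bounded by the compact support of $v$ and the odd‑dimensional Huygens principle. Taking $N\ge N_0(n,q,k)$, for a suitable $N_0$ of order $k/(2-n/q)$, makes the exponent $\le-2k-n/2-1$, so $|\xi|^{2k}\widehat{B_Nv}\in L^2_\xi$, i.e.\ $\Delta^kB_Nv\in L^2_{\opn{loc}}$; the asserted bound on $B(0,R_1)$ follows because $B_Nv$ is compactly supported, so its global $H_{(2k)}$ norm is controlled.

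The main obstacle is that multilinear resolvent estimate with the sharp power of $\lambda$. The genuine difficulty lies in the low regularity of $v$: one must chain the dispersive bounds for $R_0(\lambda)$ through $N$ multiplications by a function only in $L^q$, choosing the Lebesgue/weight exponents so that they cycle and the $\lambda$‑powers add up, while keeping the resulting constant geometric in $N$ (so that the series $\sum_NB_Nv$ converges) and uniform over the directions $\xi/|\xi|$ and over $x\in B(0,R_1)$. Here $q>n$, rather than $q=n$, is exactly what supplies the room in the underlying Sobolev embeddings and resolvent estimates for the chain to close with a strictly positive gain; and finite propagation speed together with the odd‑dimensional strong Huygens principle is what makes the passage to the resolvents legitimate and the time integrals convergent. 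Carrying out this estimate — not any single inequality — is the technical heart of the argument.
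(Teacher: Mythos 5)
First, a point of reference: the paper does not prove this statement at all --- it is quoted verbatim as Theorem~8 of \cite{M:rims} and used as a black box, so there is no in-paper proof to compare against. Your overall strategy (pass to the Fourier side, recognize $\widehat{B_Nv}$ as an $N$-fold Born iterate of free resolvents sandwiched between copies of $\hat v$, and extract decay in $|\xi|$ that improves with $N$) is the right general picture and is consistent with the machinery the paper itself develops in Sections 3--4, where $\Cal F E_{N,R}$ is shown to be a product of regularized resolvent-type factors $h_\gamma(|\xi_j|,|\xi_N|)$. But as a proof the proposal has two genuine gaps, and you yourself flag the first one as ``the technical heart'' without closing it.

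The first gap is the multilinear resolvent estimate. You propose to bound the rescaled $\omega$-integral by $C^N$ times a product of $L^{q'}$ norms of $\hat v(\lambda\,\cdot)$ via Hausdorff--Young together with ``restriction/$L^p$--$L^{p'}$ bounds''. That chaining does not close for $q>n$: the uniform Sobolev/resolvent estimates of Kenig--Ruiz--Sogge type require $\tfrac1p-\tfrac1{p'}\ge\tfrac2{n+1}$, whereas cycling the exponents through multiplication by $v\in L^q$ forces $\tfrac1p-\tfrac1{p'}=\tfrac1q<\tfrac1n$, which is outside that range for $n\ge 3$. The workable route for $q>n$ is the weighted-$L^2$ limiting absorption principle combined with Sobolev embedding ($v\colon H_{(\theta)}\to L^2$ costs $\theta\ge n/q$ derivatives, and $R_0(\lambda)$ gains at most one derivative together with one power of $\lambda$ on the characteristic sphere), which yields a gain of order $\lambda^{-(1-n/q)}$ per factor, not the $\lambda^{-(2-n/q)}$ you claim; your exponent book-keeping implicitly credits each resolvent with the full elliptic gain $\lambda^{-2}$, which fails near $|\eta_j|=\lambda$. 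The conclusion ($N_0$ exists) survives with the corrected rate, but the estimate as written is not established and its quantitative form is wrong. The second gap is the factor $1/N!$: you propose to import it from the bound $\normrum{K_N(t)}{L^2\to L^2}\le\normrum{v}{L^\infty}^N t^{2N+1}/(2N+1)!$, but that bound uses $\normrum{v}{L^\infty}$, which is not controlled by $\normrum{v}{L^q}$ for $v\in L^q$, and in any case a separate time-domain $L^2$ bound cannot simply be multiplied into a frequency-decay bound to produce a single estimate with both the smoothing and the factorial. One must track the simplex volume $t^{N-1}/(N-1)!$ (equivalently, the $N$-dependence of the cutoff and of the parameter $\gamma$, as the present paper does via $\chi_N$ and the choice $\gamma\sim N/R$) \emph{inside} the frequency estimate. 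Until these two points are carried out, the proposal is a programme rather than a proof.
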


The aim of this paper is to study (local) continuity properties of
the operators $B_N$ in $H_{(s)}$ spaces.

Let $\normrum{\cdot}{(s)}$ denote the  norm on
the Sobolev space $H_{(s)}(\RRn)$.
Also $H_{(s)}(\Omega)$, $s\ge 0$, is 
the space of functions which are restrictions to $\Omega$ of 
functions from the Sobolev space $H_{(s)}(\RRn)$, when $\Omega$ is an open
set with smooth boundary. 
The norm on $H_{(s)}(\Omega)$, $s\ge 0$, is the quotient norm
$$ \normrum{f}{H_{(s)}(\Omega)} =\inf\{ \normrum{F}{(s)}; F \in H_{(s)}(\RRn), F= f \, \text{in}\, \Omega\}.$$ 

Our main result here is contained in the next theorem.

\begin{theorem}\label{mainthm:1}
Assume  $0\le a \le s-(n-3)/2$, and let N$(a, s)$ be the smallest integer $N$ such that
$a<N-1$ and $a\le (N-1) (s-(n-3)/2)$.
Then  there is a constant $C$, which depends on $n$, $s$ and  $a$ only,
such that
$$ 
\normrum{B_N v}{H_{(s+a)}(B(0, R))} \le C^N R^{(N-1)/2} N^{-N/2} \normrum{v}{(s)}^N 
$$
when $N\ge N(a, s)$, $R>0$ and  
$v \in C_0^\infty(B(0, R))$. 
 \end{theorem}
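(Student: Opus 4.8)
The plan is to reduce everything to a single-term estimate on the building block $G_{N-1}$ and its $B_N$-image, and then to sum the resulting multilinear bounds in a way that produces the claimed factor $R^{(N-1)/2}N^{-N/2}$. First I would unwind the inductive definition \eqref{bn:2}–\eqref{bn:4} so as to write $B_Nv$, via \eqref{bn:5}, as an explicit $N$-linear expression in $v$: an iterated space–time convolution of copies of the free kernel $K_0(t)=\sin(t|D|)/|D|$ interlaced with multiplications by $v$, integrated against $v\dot K_0$ in the last slot and then restricted to the backscattering diagonal $y\mapsto(y,2x-y)$. The point of this bookkeeping is that each factor $K_0(t)$ is a Fourier multiplier of order $-1$ that is, moreover, controlled in time by the support condition $|x-y|\le t$ together with the compact support of $v$ inside $B(0,R)$; so each convolution both smooths by one derivative and contributes a factor roughly of size $R/(2N+1)!$-type, which is the source of the $N^{-N/2}$ gain. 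I would make this precise with an $H_{(s)}\to H_{(s+1)}$ mapping estimate for each elementary step $u\mapsto\int_0^t K_0(s)\,v\,u(\cdot)\,ds$ (or its time-differentiated variant), tracking the dependence on $R$ and on $\normrum{v}{(s)}$; here the hypothesis $s\ge(n-3)/2$ is exactly what guarantees that $H_{(s)}$ is an algebra-type module over which multiplication by $v$ and the order-$(-1)$ smoothing can be iterated without loss (pointwise multiplication $H_{(s)}\cdot H_{(s)}\hookrightarrow H_{(s)}$ would cost $s>n/2$, but one only needs $s+1>n/2$, i.e. $s\ge(n-1)/2$, together with duality to absorb the extra room down to $(n-3)/2$ — this trade-off is what the fractional exponent $a$ encodes).

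The core quantitative step is: for $v\in C_0^\infty(B(0,R))$ and each elementary convolution-with-$K_0$ step, one gains one order of Sobolev regularity at the cost of a factor $C\,R^{1/2}\normrum{v}{(s)}$ and a time-volume factor that, after the $N$-fold iteration and integration of the kernel of $K_{N-1}$ over $0\le t$ with the support constraints, collapses to the reciprocal of a factorial. Collecting $N-1$ such steps (one per copy of $v$ beyond the first, which is the one being convolved) yields $N-1$ gained derivatives in principle, i.e. $B_Nv\in H_{(s+N-1)}$ heuristically, and a bound of the shape $C^N R^{(N-1)/2}\normrum{v}{(s)}^N/\sqrt{N!}$. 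Since $\sqrt{N!}\sim N^{N/2}e^{-N/2}$ up to polynomial factors (absorbed into $C^N$), this is exactly $C^N R^{(N-1)/2}N^{-N/2}\normrum{v}{(s)}^N$. The condition $a\le s-(n-3)/2$ and the definition of $N(a,s)$ enter because one cannot actually cash in all $N-1$ gained derivatives: each step only upgrades regularity by the "surplus" $s-(n-3)/2$ before the multiplication-by-$v$ module structure degrades, so after $N-1$ steps one can reach at most $s+(N-1)(s-(n-3)/2)$; requiring $a$ below this, and below $N-1$ (to have genuinely used at least one smoothing step on the final localization), is precisely the constraint $a<N-1$ and $a\le(N-1)(s-(n-3)/2)$ with $N\ge N(a,s)$.

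In more detail I would proceed as follows. (i) Prove a localized commutator/multiplier lemma: if $w\in H_{(s)}$ is supported in $B(0,R)$ and $P$ is the operator $u\mapsto\int_0^t K_0(t-\tau)\,w\,u(\tau)\,d\tau$ acting on time-dependent functions supported in a fixed light cone over $B(0,R)$, then $P$ maps $H_{(\sigma)}_{x}$-valued functions to $H_{(\sigma+\min(1,\,2(s-(n-3)/2))}_x)$-valued functions with operator norm $\le C R^{1/2}\normrum{w}{(s)}$ times a harmless $t$-power; the $R^{1/2}$ comes from Cauchy–Schwarz in $\tau$ over an interval of length $\lesssim R$. (ii) Iterate (i) $N-1$ times along the expansion of $K_{N-1}$, using that the innermost factor is $K_0(t-s)$ applied to the fixed $\delta$-type data, to get $G_{N-1}$ as a kernel mapping into $H_{(s+a)}$ of the required size after integrating $dt$ over the bounded cone. (iii) Feed this through \eqref{bn:5}: the map $G_{N-1}(y,z)\mapsto 2^n\int v(y)G_{N-1}(y,2x-y)\,dy$ is, after the linear change of variables, a pairing of $v\in H_{(s)}\subset H_{(-s)}^*$ against the kernel, which by duality costs one more factor $C\normrum{v}{(s)}$ and at worst one unit of regularity (absorbed in the budget), yielding the stated bound on $\normrum{B_Nv}{H_{(s+a)}(B(0,R))}$. (iv) Finally sum the combinatorics: the number of ordered convolution compositions contributing to $K_{N-1}$ is bounded by $C^N$, and the nested time integrals give the $1/\sqrt{N!}\sim N^{-N/2}$ decay.

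The main obstacle I anticipate is step (i): getting the *sharp* exponent $\min(1,2(s-(n-3)/2))$ — i.e. proving that each step gains a full derivative once $s\ge(n-1)/2$ but degrades linearly in the deficit $s-(n-3)/2$ below that — requires a careful paraproduct decomposition of the product $w\cdot u$ together with the precise mapping properties of $K_0$ as a Fourier integral/multiplier of order $-1$, being attentive to low frequencies (where $\sin(t|D|)/|D|$ is bounded but not smoothing) versus high frequencies. Keeping the $R$-dependence uniform and showing it is exactly $R^{1/2}$ per step (so $R^{(N-1)/2}$ overall), rather than something that blows up with $N$, is the other delicate bookkeeping point, handled by always Cauchy–Schwarz-ing the time integral against the compactly supported cone rather than taking sup bounds. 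Everything else is routine iteration and summation.
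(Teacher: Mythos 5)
Your route (iterating energy/smoothing estimates for the elementary step $u\mapsto\int_0^t K_0(t-\tau)\,v\,u(\tau)\,\dd\tau$ in the time domain) is genuinely different from the paper's, which rewrites $B_Nv$ as an explicit $N$-linear pairing against a fundamental solution $E_N$ of $(\Delta_1-\Delta_N)\cdots(\Delta_{N-1}-\Delta_N)$ and does all the analysis on the Fourier side. Unfortunately both of the quantitative claims your plan rests on have genuine gaps. First, the per-step smoothing down to the threshold $s\ge (n-3)/2$: a paraproduct/Sobolev-multiplication argument of the kind you describe cannot reach this exponent. Your own accounting gives $s>n/2-1=(n-2)/2$, and the promised ``duality to absorb the extra room down to $(n-3)/2$'' is exactly the missing half-derivative; it is not a module-structure fact but a restriction/trace phenomenon tied to the curvature of the characteristic cone. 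Concretely, the paper's Lemma~\ref{5.4} reduces to the angular average $\int_{\bS^{n-1}}\hake{r\theta-\eta}^{-2s}\dd\theta\le C\hake{r}^{-2s}$, which holds precisely for $s\le (n-3)/2+1-\ep$ because the projected surface measure carries the weight $(1-t^2)^{(n-3)/2}$; without an estimate of this type (uniform in the translation $\eta$, which is what makes the iteration over the $N-1$ frequency variables close) the induction does not run at the stated regularity. Your claimed per-step gain $\min(1,2(s-(n-3)/2))$ is also off: the correct increment is $\min(s-(n-3)/2,\,1-\ep)$, which is what forces the hypotheses $a\le (N-1)(s-(n-3)/2)$ and $a<N-1$.

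Second, and more seriously, the source you propose for the factor $N^{-N/2}$ does not exist in this setting. For $v$ supported in $B(0,R)$ the support condition $|x_1|+\cdots+|x_{N-1}|=|x_N|=t$ lets $t$ range up to about $2(N-1)R$, so the nested time integrals contribute $t^N/N!\sim (eR)^N$ at the endpoint, and the factorial is completely eaten by the growth of $t^N$; equivalently, the constraint that each increment is $\le 2R$ already caps the simplex volume at $(2R)^N$, so you cannot collect both an $R^{1/2}$ per step \emph{and} an extra $1/\sqrt{N!}$. The paper obtains the decay by an entirely different mechanism: it inserts an exponential weight $\ee^{-\gamma t}$, shows each of the $N-1$ frequency integrations gains a factor $\gamma^{-1}$ (because $\Cal F E_{N,R}$ is concentrated in shells $\abs{\,|\xi_j|-|\xi_N|\,}\lesssim\gamma$), pays $\ee^{cR\gamma}$ and $(N/(R\gamma))^{4N+2}$ from a Denjoy--Carleman-type cutoff $\chi_N$ with $|\chi_N^{(k)}|\le (CN)^k$ for $k\le 2N+2$, and then optimizes at $\gamma\sim N/R$ to produce $(R/N)^{N-1}$ for the squared norm. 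Without this (or some substitute) your bookkeeping yields at best $C^NR^{(N-1)/2}\normrum{v}{(s)}^N$ with no $N^{-N/2}$, which is too weak even to give the entire analyticity in Corollary~\ref{anal}.
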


A first corollary of this result is the above-mentioned analyticity of
the backscattering transformation.

\begin{corollary}\label{anal}
The mapping $C_0^\infty(\RR^n) \ni v \to Bv \in  C^\infty(\RR^n)$ extends to 
an entire analytic mapping from 
$ H_{(s)}(\RR^n)\cap {\Cal E}'(\RR^n)$ to  $H_{(s), \opn{loc}}(\RR^n) $
whenever $s\ge (n-3)/2$.
\end{corollary}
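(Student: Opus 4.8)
The plan is to deduce the corollary from Theorem 1.3 by a standard majorant-series argument, together with the low-order smoothing information furnished by Theorem 1.2 (applied with some $q>n$, which covers compactly supported $v\in H_{(s)}$ once $s$ is large enough, and otherwise by the elementary $L^2$ estimates already recorded for the $K_N$). First I would fix $s\ge(n-3)/2$ and a bounded open set, say a ball $B=B(0,R)$, and observe that it suffices to prove that $v\mapsto Bv$ extends to an entire analytic map from $H_{(s)}(\RRn)\cap\Cal E'(B)$ into $H_{(s)}(B)$, since $H_{(s),\opn{loc}}$ is the projective limit of the $H_{(s)}(B)$ over exhausting balls and the constants in Theorem 1.3 do not depend on $v$. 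Because $C_0^\infty(B)$ is dense in $H_{(s)}(\RRn)\cap\Cal E'(\overline B)$ and each $B_N$ is $N$-homogeneous, the whole matter reduces to the two estimates: (i) a bound $\normrum{B_Nv}{H_{(s)}(B)}\le C^N\normrum{v}{(s)}^N$ for the finitely many $N<N(0,s)$, and (ii) the bound of Theorem 1.3 with $a=0$ for $N\ge N(0,s)$, namely $\normrum{B_Nv}{H_{(s)}(B)}\le C^NR^{(N-1)/2}N^{-N/2}\normrum{v}{(s)}^N$.

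The second step is to verify (i). For $N\ge N_0(n,q,0)$ as in Theorem 1.2, the estimate there already gives $\normrum{B_Nv}{L^2(B)}\le C^N\normrum{v}{L^q}^N/N!$, and Sobolev embedding $H_{(s)}\hookrightarrow L^q$ for $s$ large (together with the obvious support-diameter control on the $L^q$ norm of a function supported in $B$ via the quotient norm) bounds $\normrum{v}{L^q}$ by $C_R\normrum{v}{(s)}$; for the remaining finitely many small $N$ one uses formula (1.5) expressing $B_Nv$ through $G_{N-1}$ and the crude $L^2\to L^2$ bounds on the $K_N(t)$ integrated over the compact $t$-range dictated by finite propagation speed, which yield $\normrum{B_Nv}{L^2(B)}\le C^N\normrum{v}{L^\infty}^N$ and hence $\le C_R^N\normrum{v}{(s)}^N$ after another Sobolev embedding. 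One might instead simply cite that $Bv$ is already known to be entire analytic into $\Cal D'$ on $L^q_{\opn{cpt}}$ and that the individual $B_N$ are continuous polynomials there, so only the $H_{(s)}(B)$-valued regularity for $N\ge N(0,s)$ is genuinely new — but giving the explicit bound (i) keeps the argument self-contained.

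The third and final step is the majorant series. Combining (i) and (ii), for every $v\in C_0^\infty(B)$ and every complex $z$ one has
$$
\sum_{N\ge1}\normrum{B_N(zv)}{H_{(s)}(B)}
\le\sum_{N<N(0,s)}C^N|z|^N\normrum{v}{(s)}^N
+\sum_{N\ge N(0,s)}C^NR^{(N-1)/2}N^{-N/2}|z|^N\normrum{v}{(s)}^N,
$$
and by Stirling the general term of the second sum is $O\!\big((C'|z|\normrum{v}{(s)}R^{1/2}/\sqrt N)^N\big)$, so the series converges for all $z\in\CC$, locally uniformly. Hence $z\mapsto B(zv)=\sum_N z^N B_Nv$ is an entire $H_{(s)}(B)$-valued function, and more generally, writing $v=v_1+\cdots$, multilinearity of the $N$-th term together with the same majorant shows that $v\mapsto Bv$ is given on $C_0^\infty(B)$ by a power series with infinite radius of convergence in the $H_{(s)}(\RRn)$-norm; it therefore extends to an entire analytic map on all of $H_{(s)}(\RRn)\cap\Cal E'(\overline B)$, with values in $H_{(s)}(B)$. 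Letting $B$ exhaust $\RRn$ gives the stated map into $H_{(s),\opn{loc}}(\RRn)$, and since the extension agrees with the classical $Bv$ on the dense subspace $C_0^\infty$ it is the unique such extension.

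The main obstacle I anticipate is purely bookkeeping rather than conceptual: one must make sure that the constants $C$ appearing in (i), which come from Theorem 1.2 and from the Sobolev embeddings, depend on $R$ in a way compatible with the exhaustion (they do, since for fixed $B$ they are genuine constants, and the corollary only claims local regularity), and that the finitely many low-order terms are handled for the borderline value $s=(n-3)/2$, where $a=0$ is forced and $N(0,s)$ is simply the smallest $N$ with $N-1>0$, i.e. $N=2$ — so in fact \emph{all} terms $B_N$ with $N\ge2$ are covered by Theorem 1.3 directly, and only the linear term $B_1v=v$ needs separate (trivial) treatment. The argument is otherwise a routine Cauchy-estimate / vector-valued power series manipulation.
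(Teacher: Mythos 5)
Your proof is correct and is essentially the deduction the paper intends (the corollary is stated as an immediate consequence of Theorem \ref{mainthm:1}): with $a=0$ one has $N(0,s)=2$, so every term $B_N v$, $N\ge 2$, obeys the bound $C^N R^{(N-1)/2}N^{-N/2}\normrum{v}{(s)}^N$ in $H_{(s)}(B(0,R))$, the factor $N^{-N/2}$ makes the majorant series entire, and $B_1v=v$ is trivial. The preliminary detour through Theorem \ref{cor:rims} and Sobolev embeddings for ``low-order'' terms is superfluous (and would be delicate at the borderline $s=(n-3)/2$), but you correctly observe in your final paragraph that it is not needed, so the argument stands.
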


A second corollary gives the regularity of the difference between $v$ 
and its backscattering transform.

\begin{corollary}\label{cor:reg}
Assume $s\ge (n-3)/2$ and $0\le a <1$ satisfy
$a\le s-(n-3)/2$.
If $v\in H_{(s)}(\RR^n)$ is compactly  supported, then
\begin{equation}\label{1} 
v-Bv \in H_{(s+a), \opn{loc}}(\RR^n).
\end{equation}
\end{corollary}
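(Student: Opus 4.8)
\emph{Proof proposal.}
The plan is to read the result off Theorem~\ref{mainthm:1}, using the elementary observation that $B_1v=v$, so that
$$v-Bv=-\sum_{N\ge 2}B_Nv .$$
First I would identify the integer $\mathrm N(a,s)$ of Theorem~\ref{mainthm:1} under the present hypotheses: the condition $a<N-1$ fails for $N=1$ but holds for $N=2$ since $a<1$, while the condition $a\le (N-1)\bigl(s-(n-3)/2\bigr)$ reduces at $N=2$ to the assumed inequality $a\le s-(n-3)/2$. Hence $\mathrm N(a,s)=2$, and Theorem~\ref{mainthm:1} is applicable to \emph{every} term $B_Nv$ occurring in $v-Bv$.

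Next, for $v\in C_0^\infty(\RRn)$ with $\opn{supp} v\subset B(0,R)$, Theorem~\ref{mainthm:1} yields
$$\normrum{B_Nv}{H_{(s+a)}(B(0,R))}\le C^NR^{(N-1)/2}N^{-N/2}\normrum v{(s)}^N,\qquad N\ge 2 .$$
The right-hand side equals $R^{-1/2}\bigl(CR^{1/2}\normrum v{(s)}/\sqrt N\,\bigr)^N$, hence is eventually dominated by $2^{-N}$, so $\sum_{N\ge 2}B_Nv$ converges absolutely in the Banach space $H_{(s+a)}(B(0,R))$. Its sum is $Bv-v$, because $\sum_{N\ge 1}B_Nv$ converges to $Bv$ in $\Cal D'$; thus $v-Bv\in H_{(s+a)}(B(0,R))$ for every such $R$, which is \eqref{1} for smooth $v$.

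To remove the smoothness hypothesis, fix $R$ with $\opn{supp} v\subset B(0,R)$ and choose $v_j\in C_0^\infty(B(0,R))$ with $v_j\to v$ in $H_{(s)}$. Each $B_N$ is an $N$-homogeneous polynomial map, so by polarization — which costs at most a factor $N^N/N!\le \ee^{N}$ — the estimate above makes $v\mapsto B_Nv$ locally Lipschitz from $H_{(s)}$ into $H_{(s+a)}(B(0,R))$, with Lipschitz constant of order $N\,(N^N/N!)\,C^NR^{(N-1)/2}N^{-N/2}M^{N-1}$ on the ball $\{\normrum v{(s)}\le M\}$. The super-exponential factor $N^{-N/2}$ still wins, so $\sum_{N\ge 2}\normrum{B_Nv_j-B_Nv_k}{H_{(s+a)}(B(0,R))}\to 0$; hence $v_j-Bv_j=-\sum_{N\ge2}B_Nv_j$ is Cauchy in $H_{(s+a)}(B(0,R))$. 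Its limit must be $v-Bv$, since $v\mapsto Bv$ is continuous into $\Cal D'$ (cf.\ Corollary~\ref{anal}), so $v-Bv\in H_{(s+a)}(B(0,R))$. Letting $R$ range over all sufficiently large radii gives $v-Bv\in H_{(s+a),\opn{loc}}(\RRn)$.

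The argument is essentially bookkeeping on top of Theorem~\ref{mainthm:1}; the only point that requires genuine care, and which I expect to be the main obstacle, is the density step — writing out the polarized (multilinear) version of the estimate and checking that the extra polarization factor, being only exponential in $N$, does not destroy the summability provided by $N^{-N/2}$. Everything else is immediate once one observes that the hypotheses on $a$ are precisely what forces $\mathrm N(a,s)=2$, so that the only contribution to $Bv$ which fails to be smoother than $v$, namely $B_1v=v$, is cancelled in $v-Bv$.
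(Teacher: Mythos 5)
Your proposal is correct and follows exactly the route the paper intends: the paper gives no separate proof of this corollary, treating it as immediate from Theorem~\ref{mainthm:1} once one notes $B_1v=v$ and that the hypotheses force $\mathrm N(a,s)=2$, so that $v-Bv=-\sum_{N\ge2}B_Nv$ converges in $H_{(s+a)}(B(0,R))$ by the superexponential factor $N^{-N/2}$. Your density/polarization step is sound, though slightly redundant, since the paper already supplies the multilinear estimates directly (Theorem~\ref{mainthm:2} and Corollary~\ref{lastcor}), so the telescoping Lipschitz bound is available without invoking the polarization constant $N^N/N!$.
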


The outline of this note is as follows.
In the  next section we derive a formula that generalizes to arbitrary $N>2$ the formula
$$
 (B_2v)(x) =  
\int\limits_{(\RR^n)^{2}}
E_{2}(y_1, y_{2})
 v(x-\frac{y_2-y_1}{2}) v(x-\frac{y_1+y_2}{2})
\dd y_1 \dd y_2, 
$$
which appears in Corollary~10.7 of \cite{M:contemp}. 
Here $E_2$ is the unique fundamental solution of the ultra-hyperbolic
operator $\Delta_x-\Delta_y$ such that $E_2(x, y)= - E_2(y, x)$ and
$E_2$ is rotation invariant 
separately in $x$ and $y$. 
When $N>2$ we have to replace $E_2$  by a
distribution $E_N \in \Cal D'((\RRn )^N)$ which is a fundamental
solution of the operator 
$
P_N =(\Delta _{x_N}-\Delta _{x_1})(\Delta _{x_N}-\Delta _{x_2})\cdots
(\Delta _{x_N}-\Delta _{x_{N-1}})
$.
The distribution $E_N$ is discussed in more detail in Section~3.

Once these formulas have been obtained, the proof of the theorem becomes elementary.
The third section contains estimates of the Fourier transforms of  (cut-offs of) $E_N$.
These are in turn used in the fourth section when the estimates in 
Theorem~\ref{mainthm:1} are obtained by Fourier transforming the formula for $B_N v$.

We close this presentation with a few words on the existing literature on backscattering 
problems for the potential scattering in odd dimensions.
The backscattering map was studied also in \cite{ER89} for dimension $3$ and in \cite{ER92}
for arbitrary dimensions, and local uniqueness was proved 
for potentials in a certain weighted H\"older space.
The actual backscattering transform defined as above was considered in
\cite{L} for dimension $3$, and it was proved to be analytic 
when defined  on small potentials $v$ such that $\nabla v\in L^1$ and with values in the same space, and consequently uniqueness for the inverse backscattering problem was obtained for small potentials in this space.
Generic uniqueness was proved in \cite{S} for compactly supported bounded 
potentials in dimension $3$.
We also mention \cite{U} for an approach using  Lax-Phillips scattering.
The problem of recovering the singularities of $v$ from the backscattering data was
considered in \cite{GU}, \cite{J} and \cite{ruiz}.
Our result here improves the results in \cite{ruiz} in the sense that 
it shows that the  difference between the potential $v$ and its backscattering transform 
is more regular 
and the result holds for arbitrary odd $n\ge 3$.

Finally, let us fix some notation we use throughout the paper.
If $N\ge 2$ we use the notation $\vec x= (x_1, \dots, x_N)\in (\RR^m)^N$ where
$x_1, \dots x_N\in \RR^m$, for $m$ a positive integer. 
If $x\in \RR^m$ we shall set $\hake{x}=(1+|x|^2)^{1/2}$.
The Fourier transform of a distribution $u$ will be denoted either by
$\hat u$ or by ${\Cal F}u$.

\section{A formula for $B_N$}

In this section we are going to write $B_Nv$ as the value at $(v, \dots, v)$ of a $N$-linear operator defined 
from $C_0^\infty(\RR^n)\times \cdots \times C_0^\infty(\RR^n)$ to $C^\infty(\RRn)$, 
following the procedure in \cite{M:rims}.
The key point here is the fact that $K_0(t)$ obeys Huygens' principle, more specifically, 
that its convolution kernel $k_0(x;t)$ is supported in the set where $|x|=t$.

When $N=1, 2, \dots$ 
we define $Q_N \in {\Cal D}'((\RR^n)^N \times \RRplus)$ inductively
by
\begin{gather}
Q_1(x; t) =  k_0(x; t),\label{en:1}\\
Q_N(x_1, \dots, x_N; t) =\int\limits_0^t Q_{N-1}(x_1, \dots x_{N-1}; t-s) Q_1(x_N; s)\dd s  \quad \text{when } N\ge 2.\label{en:2}
\end{gather} 
Then the mapping
$$
\RRplus\ni t\rightarrow Q_N(x_1, \dots x_N; t) \in {\Cal D}'((\RR^n)^N)
$$
is smooth when $N\ge 1$. 
It is easily seen that $Q_N$ is symmetric in $x_1, \dots x_N$, rotation invariant separately in these variables and, since $|x|=t$ in the support of $k_0(x, t)$, it follows that 
\begin{equation}\label{en:3} 
|x_1|+\cdots +|x_N|= t \quad \text{in}\quad \opn{supp}Q_N.
\end{equation}
Next we define $E_N\in {\Cal D}'((\RR^n)^N)$, $N\ge 2$, by
\begin{equation}\label{en:5}
E_N(x_1, \dots, x_N) = (-1)^{N-1}\int\limits_0^\infty Q_{N-1}(x_1, \dots, x_{N-1}; t) \dot{k}_0(x_N; t) \dd t.
\end{equation} 
It follows from \eqref{en:3} that
\begin{equation}\label{en:6}
|x_1|+\cdots+ |x_{N-1}|= |x_N| \quad \text{in}\quad \opn{supp}E_N,
\end{equation}
$E_N$ is rotation invariant separately in all variables, and symmetric in 
$x_1, \dots, x_{N-1}$. 
We recall here that
$$
E_2(x, y) = 4^{-1} (\ii\pi)^{1-n} \delta\upp{n-2}(x^2-y^2) \quad \text{on}\quad  \RRn\times \RRn
$$
is the unique fundamental solution of the ultra-hyperbolic operator
$\Delta_x-\Delta_y$ such that $E_2(x, y)= - E_2(y, x)$ and $E_2$ is rotation invariant
separately in $x$ and $y$. (See Theorem~10.4 and Corollary~10.2 in 
\cite{M:contemp}.)

The next lemma follows easily from \eqref{bn:2} and \eqref{bn:4} 
by induction and some simple computations.
\begin{lemma}\label{lemma:bn1}
Assume $v\in C_0^\infty(\RR^n)$. Then
\begin{align}
 K_N(x, y; t)&  = \int v(x_1)\cdots v(x_N) Q_{N+1} 
(x-x_1, x_1-x_2, \dots, x_{N-1}-x_N,  x_N-y; t) \dd \vec x, \label{en:7001}\\
G_N(x, y) &  = \int\limits_{(\RR^n)^N} 
v(x_1) \cdots  v(x_N) E_{N+1} (x-x_1, x_1-x_2, \dots, x_{N-1}- x_{N},  x_N-y)  \dd \vec x \label{en:8001}
\end{align}
for every $N\ge 1$.
\end{lemma}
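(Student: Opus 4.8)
The plan is to prove \eqref{en:7001} by induction on $N$ and then to read off \eqref{en:8001} from \eqref{bn:4}, \eqref{en:5} and the formula just obtained. Throughout, the ``integrals'' below should be understood as compositions of operators and convolutions of $\Cal D'$-valued functions of the time variable; they are meaningful because $v$ has compact support and because of the support properties $|x-y|\le t$ on $\opn{supp}K_N(\cdot\,,\cdot\,;t)$, $|x|=t$ on $\opn{supp}k_0(\cdot\,;t)$ and \eqref{en:3}, which confine the $s$- and $t$-integrations to bounded sets once the spatial arguments are restricted to a bounded set.

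For $N=1$ one uses that the convolution kernel of $K_0(t)$ is $k_0(\cdot\,;t)=Q_1(\cdot\,;t)$; hence by \eqref{bn:2} the kernel of $K_1(t)$ at $(x,y)$ equals
\[
\int v(z)\Bigl(\int_0^t k_0(x-z;s)\,k_0(z-y;t-s)\dd s\Bigr)\dd z .
\]
Substituting $s\mapsto t-s$ in the inner integral and comparing with \eqref{en:1}--\eqref{en:2} identifies the bracket with $Q_2(x-z,z-y;t)$, which is \eqref{en:7001} for $N=1$ (with $x_1=z$). For the inductive step, assume \eqref{en:7001} holds with $N-1$ in place of $N$. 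The kernel of $K_{N-1}(s)\,v\,K_0(t-s)$ at $(x,y)$ is $\int K_{N-1}(x,z;s)\,v(z)\,k_0(z-y;t-s)\dd z$; substituting the induction hypothesis for $K_{N-1}(x,z;s)$, relabelling $z$ as $x_N$, and integrating over $s\in(0,t)$ as prescribed by \eqref{bn:2} gives
\[
K_N(x,y;t)=\int v(x_1)\cdots v(x_N)\Bigl(\int_0^t Q_N(x-x_1,\dots,x_{N-1}-x_N;s)\,k_0(x_N-y;t-s)\dd s\Bigr)\dd \vec x .
\]
A further substitution $s\mapsto t-s$ together with \eqref{en:2} turns the inner integral into $Q_{N+1}(x-x_1,\dots,x_{N-1}-x_N,x_N-y;t)$, which proves \eqref{en:7001}.

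Finally, insert \eqref{en:7001} with $N-1$ in place of $N$ into $G_N=(-1)^N\int_0^\infty K_{N-1}(t)\,v\,\dot K_0(t)\dd t$ from \eqref{bn:4}: writing out the kernel composition with $\dot k_0$, relabelling the middle integration variable as $x_N$, and recognizing the remaining $t$-integral as the right-hand side of \eqref{en:5} with $N+1$ in place of $N$ yields \eqref{en:8001}.

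The only genuine issue is the distributional bookkeeping: one must check that the kernels $K_N(\cdot\,,\cdot\,;t)$ and $Q_N(\cdot\,;t)$ are continuous (indeed smooth, for $Q_N$) $\Cal D'$-valued functions of $t$ whose spatial supports, after pairing with the compactly supported factors $v(x_1),\dots,v(x_N)$, lie in a fixed compact set uniformly for bounded $t$, so that the Fubini-type interchanges and the substitutions $s\mapsto t-s$ above are legitimate; alternatively one can test every identity against $\varphi\in C_0^\infty$ and reduce it to the corresponding scalar identity. I expect this to be the main — and essentially the only — obstacle; the algebraic content is just the telescoping of the consecutive differences $x-x_1,\,x_1-x_2,\,\dots,\,x_N-y$ along the chain.
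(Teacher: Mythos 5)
Your proof is correct and follows exactly the route the paper indicates (the paper gives no details, stating only that the lemma ``follows easily from \eqref{bn:2} and \eqref{bn:4} by induction and some simple computations''): induction on $N$ matching the convolution structure of \eqref{bn:2} with that of \eqref{en:2} after the substitution $s\mapsto t-s$, and then reading off \eqref{en:8001} by comparing \eqref{bn:4} with \eqref{en:5}. The support observations you make ($|x-y|\le t$ on $\opn{supp}K_N$, \eqref{en:3}, and the compact support of $v$) are precisely what justifies the Fubini-type interchanges, so nothing is missing.
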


\begin{proposition}\label{prop:en1}
For $N\ge 2$
$$
(B_{N}v)(x) =  
\int\limits_{(\RR^n)^{N}}
E_{N}(y_1, \dots, y_{N})
 v(x-\frac{y_{N}}{2} - Y_0) v(x-\frac{y_{N}}{2} - Y_1)\cdots
v(x- \frac{y_{N}}{2} -Y_{N-1}) \dd \vec {y}
$$ when $v\in C_0^\infty(\RR^n)$, where
$$ 
Y_{0} =\frac{1}{2}\sum\limits_{j=1}^{N-1} y_j \quad  \text{and} \quad Y_k  =Y_0- \sum\limits_{j=1}^{k}y_j, \quad 1\le k\le N-1.
$$ 
\end{proposition}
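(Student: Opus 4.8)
The plan is to start from the integral formula \eqref{bn:5} for $B_Nv$ and substitute the expression for $G_{N-1}$ given by \eqref{en:8001} in Lemma~\ref{lemma:bn1}, then perform an affine change of variables to arrive at the stated symmetric form. Concretely, \eqref{bn:5} reads $(B_Nv)(x) = 2^n\int v(y)\,G_{N-1}(y, 2x-y)\dd y$, and \eqref{en:8001} with $N$ replaced by $N-1$ expresses $G_{N-1}(y,2x-y)$ as an integral over $(x_1,\dots,x_{N-1})$ of $v(x_1)\cdots v(x_{N-1})\,E_N(y-x_1, x_1-x_2,\dots, x_{N-2}-x_{N-1}, x_{N-1}-(2x-y))$. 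Renaming the outer variable $y$ to (say) $x_0$, we get an $N$-fold integral of a product of $N$ copies of $v$ against $E_N$ evaluated at the consecutive differences $(x_0-x_1, x_1-x_2,\dots, x_{N-2}-x_{N-1}, x_{N-1}+x_0-2x)$.

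The core of the argument is then the change of variables. I would introduce $y_j = x_{j-1}-x_j$ for $1\le j\le N-1$ as the first $N-1$ new variables, and $y_N$ as the last slot of $E_N$, namely $y_N = x_{N-1}+x_0-2x$. One checks that $y_1+\cdots+y_{N-1} = x_0-x_{N-1}$, so $y_N = x_0 + x_{N-1} - 2x = 2x_0 - (x_0 - x_{N-1}) - 2x = 2x_0 - (y_1+\cdots+y_{N-1}) - 2x$; equivalently $x_0 = x + \tfrac12 y_N + \tfrac12\sum_{j=1}^{N-1} y_j = x + \tfrac{y_N}{2} + Y_0$, which matches the claimed argument of the first factor $v(x - \tfrac{y_N}{2} - Y_0)$ up to the sign convention — here one must be careful with signs, since $Y_0 = \tfrac12\sum y_j$ and the formula wants $v(x - \tfrac{y_N}{2} - Y_0)$, so I should in fact set $y_j = x_j - x_{j-1}$ or adjust the sign of $y_N$ to make the bookkeeping come out; the substitution is linear and unimodular either way, so the Jacobian is $1$ and $\dd\vec x = \dd\vec y$. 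Then $x_k = x_0 - (y_1+\cdots+y_k) = x + \tfrac{y_N}{2} + Y_0 - \sum_{j=1}^k y_j = x + \tfrac{y_N}{2} + Y_k$ for $1\le k\le N-1$, and after relabeling one reads off exactly the product $v(x-\tfrac{y_N}{2}-Y_0)\,v(x-\tfrac{y_N}{2}-Y_1)\cdots v(x-\tfrac{y_N}{2}-Y_{N-1})$ integrated against $E_N(y_1,\dots,y_N)$.

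The remaining point is to verify that the integral makes sense: by \eqref{en:6}, $E_N$ is supported where $|y_1|+\cdots+|y_{N-1}| = |y_N|$, and since $v\in C_0^\infty$ has compact support, the integrand is compactly supported in $\vec y$, so the distributional pairing is well-defined and the manipulations (Fubini, change of variables) are legitimate; one should note that $Q_{N-1}(\,\cdot\,;t)$ and hence $E_N$ depend smoothly/continuously enough on their arguments for the substitution of the kernel formula into \eqref{bn:5} to be valid, which follows from the smoothness of $t\mapsto Q_{N-1}(\cdot;t)$ asserted before \eqref{en:3} and the support properties ensuring the $t$-integral in \eqref{en:5} is over a compact set. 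The main obstacle is purely bookkeeping: getting the signs and the indexing in the affine substitution exactly right so that the consecutive-difference arguments of $E_N$ in \eqref{en:8001} transform precisely into $(y_1,\dots,y_N)$ and the shifts $Y_0,\dots,Y_{N-1}$ appear with the stated signs — there is no analytic difficulty, only the risk of an off-by-one or a sign error, which I would pin down by checking the case $N=2$ against the formula quoted from Corollary~10.7 of \cite{M:contemp} in the introduction.
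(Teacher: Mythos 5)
Your approach is exactly the paper's: substitute \eqref{en:8001} into \eqref{bn:5} and perform the affine change of variables from $(y,x_1,\dots,x_{N-1})$ to $(y_1,\dots,y_N)$. But there is one concrete error and one unresolved point. The error is the claim that the substitution is unimodular with $\dd\vec x=\dd\vec y$. Writing the map in scalar block form, $y_j=x_{j-1}-x_j$ for $1\le j\le N-1$ together with $y_N=x_0+x_{N-1}-2x$ has determinant $\pm 2$ per coordinate (check $N=2$: the matrix $\bigl(\begin{smallmatrix}1&-1\\1&1\end{smallmatrix}\bigr)$ has determinant $2$), hence Jacobian $2^n$ on $(\RRn)^N$. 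This factor is not a nuisance to be waved away: it is precisely what cancels the prefactor $2^n$ in \eqref{bn:5}, and with your ``Jacobian is $1$'' the final formula comes out off by $2^n$.

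The unresolved point is the sign bookkeeping you defer to a check of the case $N=2$. With the sign-free choice $y_j=x_{j-1}-x_j$, $y_N=x_0+x_{N-1}-2x$, the arguments of the $v$'s come out as $x+\frac{y_N}{2}+Y_k$ rather than $x-\frac{y_N}{2}-Y_k$; to pass to the stated form one replaces $\vec y$ by $-\vec y$, and this step requires knowing that $E_N(y_1,\dots,y_N)$ is even in each variable separately. That is exactly how the paper closes the argument: evenness follows from the separate rotation invariance of $E_N$ established after \eqref{en:5}. You should invoke this explicitly rather than leave the sign as an undetermined convention, since without it the two sign choices genuinely give different-looking integrands.
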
 
\begin{proof}
We use \eqref{en:8001} to express $G_{N-1}(y, 2x-y)$ in \eqref{bn:5} and  
get thus
$$
\begin{gathered}
(B_{N} v)(x) = 2^n \int\limits_{\RR^n\times (\RR^n)^{N-1}}
v(y) v(x_1) \cdots v(x_{N-1})\\
 E_{N} (y-x_1, x_1-x_2, \dots, x_{N-2}- x_{N-1},  x_{N-1} + y -2x) \dd y\dd \vec x.
\end{gathered}
$$
The proposition follows by changing variables 
$ y-x_1=-y_1$, $x_1-x_2=-y_2$, $\dots$, $x_{N-2}-x_{N-1}= - y_{N-1}$,  
$x_{N-1} + y -2x = -y_{N}$,
hence
\allowdisplaybreaks
\begin{gather}
y = x- \frac{1}{2} \sum\limits_{j=1}^{N} y_j = x-\frac{y_{N}}{2} -Y_0
\nonumber
\\
x_1 = y+y_1 =  x-\frac{y_{N}}{2} - Y_1
\nonumber
\\
\dots 
\nonumber
\\
x_{N-1} = x_{N-2} + y_{N-1} =  x-\frac{y_{N}}{2} - Y_{N-1}.
\nonumber
\end{gather}
Here we have made use of the invariance properties of $E_N$, 
which in particular ensure that $E_N(y_1,\dots, y_N)$ is even in each $y_j$.
\end{proof}

\section{The distribution $E_N$}
We need some further information on the distribution $E_N$ defined 
in \eqref{en:5}.

The first result is a characterization of $E_N$.
We denote
$$
P_N = (\Delta _1- \Delta _N)\cdots (\Delta _{N-1}- \Delta _N),
$$
where $\Delta _j$ in the Laplacian in the variables
$x_j$. 

\begin{lemma}\label{lemma:uniqueness}
The distribution $E_N$ is a fundamental solution of $P_N$. 
It has the  following properties:
\begin{itemize}
\item[\rm (i)] $E_N(x_1, \dots , x_N)$ is  rotation invariant in each $x_j$;
\item[\rm (ii)]
$|x_1|+ \cdots + |x_{N-1}|= |x_N| $ in the support of $E_N$;
\item[\rm (iii)]
$E_N$ is homogeneous of degree $2(N-1)-nN$.
\end{itemize}
If $E$ is a fundamental solution of $P_N$ that satisfies {\rm (i)-(iii)},
then $E=E_N$.
\end{lemma}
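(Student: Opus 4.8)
The plan is to establish the four assertions about $E_N$ in turn, then settle uniqueness. First I would verify that $E_N$ is a fundamental solution of $P_N$. The natural route is induction on $N$, exploiting the recursive definitions \eqref{en:1}, \eqref{en:2}, \eqref{en:5}. The base case $N=2$ is the already-quoted fact that $E_2$ solves $\Delta_x - \Delta_y$. For the inductive step, I would differentiate \eqref{en:5} under the integral: applying $(\Delta_1 - \Delta_N)\cdots(\Delta_{N-1}-\Delta_N)$ to $E_N$, one uses that $Q_{N-1}(x_1,\dots,x_{N-1};t)$ satisfies the wave equation $(\partial_t^2 - \Delta_j)Q_{N-1} = 0$ away from the lower-order contributions in each variable $x_j$ separately (since it is built by convolution from copies of $k_0$, each of which solves the wave equation in its own space variable), while $k_0(x_N;t)$ solves $(\partial_t^2 - \Delta_N)k_0 = 0$. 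Replacing each $\Delta_j - \Delta_N$ acting on the product by the corresponding $\partial_t^2$-difference and integrating by parts in $t$, the telescoping collapses the integral to boundary terms at $t=0$, which produce the delta mass at the origin. This is essentially the same computation that gives $G = -\int K_v v \dot K_0$ its meaning, carried out at the level of convolution kernels; I expect this to be the main obstacle, mostly in handling the distributional products and the $t$-integration by parts carefully enough to track exactly which boundary term survives.

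Next, property (i), rotation invariance in each $x_j$, is immediate from the construction: $k_0(x;t)$ is radial in $x$, the convolutions in \eqref{en:2} preserve radiality in each slot, and \eqref{en:5} then passes this to $E_N$; this is already noted in the text right after \eqref{en:6}. Property (ii) is exactly \eqref{en:6}, which was derived from the support property \eqref{en:3} of $Q_N$. For property (iii), homogeneity of degree $2(N-1)-nN$, I would argue by scaling: $k_0(x;t)$ is homogeneous of degree $1-n$ under the simultaneous scaling $(x,t)\mapsto(\lambda x,\lambda t)$, and $\dot k_0$ of degree $-n$; each convolution integral $\int_0^t \cdots \dd s$ in \eqref{en:2} adds $1$ to the degree and each factor contributes, so $Q_{N-1}$ is homogeneous of degree $(N-1)(1-n) + (N-2) = (N-1) - n(N-1) + (N-2)$ in $(x_1,\dots,x_{N-1},t)$; then the $t$-integral in \eqref{en:5} against $\dot k_0$ (degree $-n$ in $(x_N,t)$) adds another $1$, giving total degree $2(N-1) - nN$ in $(x_1,\dots,x_N)$. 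Alternatively, and more cleanly, homogeneity follows directly from (i), (ii) once one knows $E_N$ is a fundamental solution: $P_N$ is homogeneous of order $2N$ as a differential operator of order $2N$, so any homogeneous fundamental solution on $(\RR^n)^N$ has degree $-nN + 2N$; but I would still need the scaling argument to know $E_N$ \emph{is} homogeneous, so I would present the scaling computation.

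Finally, uniqueness. Suppose $E$ is another fundamental solution satisfying (i)--(iii). Then $u = E - E_N$ solves $P_N u = 0$, is rotation invariant in each $x_j$, homogeneous of degree $2(N-1)-nN$, and (by (ii) for both) is supported in the cone $|x_1|+\cdots+|x_{N-1}| = |x_N|$, a set of measure zero; in particular $\operatorname{supp} u \subseteq \{x_N = 0\} \cup \{(x_1,\dots,x_{N-1})\ne 0,\ x_N\ne 0\}$, and more to the point $u$ is supported on a proper conic subvariety. I would take the Fourier transform: $\hat u$ is annihilated by the symbol $p_N(\xi) = \prod_{j=1}^{N-1}(|\xi_j|^2 - |\xi_N|^2)$ — wait, I must be careful, $P_N$ has principal-type-like factors but $\hat u$ supported where $p_N = 0$ does not by itself force $\hat u = 0$. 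The better approach: since $u$ is supported in a set of Lebesgue measure zero and is a distribution solving an elliptic-off-the-characteristic-variety equation... actually the cleanest is to use the support condition on $u$ itself directly. By \eqref{en:6}-type reasoning applied to the difference, $u$ is supported in $\{|x_N|^2 = (|x_1|+\cdots+|x_{N-1}|)^2\}$; combined with rotation invariance in each variable, $u$ is determined by a distribution in the radial variables $r_1,\dots,r_N$ supported on the hyperplane $r_N = r_1 + \cdots + r_{N-1}$, hence of the form (tangential derivatives of) a delta on that hyperplane. Feeding this ansatz into $P_N u = 0$ and matching homogeneity degrees, one finds the only solution consistent with the degree $2(N-1)-nN$ is $u = 0$. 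I would organize this as: reduce to radial variables, observe $u$ lives on a hyperplane there, write the general such distribution with the correct homogeneity, and check $P_N$ kills only the zero one. This uniqueness argument, like the fundamental-solution verification, is where the real work lies; everything else is bookkeeping on the recursion.
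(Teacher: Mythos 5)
Your treatment of the fundamental-solution property and of (i)--(iii) is essentially the paper's: induction on $N$, the wave equation for $k_0$ and the inhomogeneous wave equation satisfied by $Q_{N-1}$ (whose source term $Q_{N-2}\otimes\delta(x_{N-1})$ is what produces the identity $(\Delta_{N-1}-\Delta_N)E_N=E_{N-1}\otimes\delta(x_{N-1})$ --- note that the delta does not come from boundary terms at $t=0$, which vanish for $N\ge 3$, but from this source term together with the inductive hypothesis $P_{N-1}E_{N-1}=\delta$), and the scaling computation for the degree. Two small slips there: $P_N$ has order $2(N-1)$, not $2N$, so your ``cleaner alternative'' for (iii) gives the wrong degree $2N-nN$ instead of $2(N-1)-nN$; your explicit scaling computation is, however, correct.

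The uniqueness part has a genuine gap. The paper's argument hinges on a symmetry theorem (Theorem~10.1 of \cite{M:contemp}): a distribution that is rotation invariant separately in $x$ and $y$ and solves the ultra-hyperbolic equation $(\Delta_x-\Delta_y)\Psi=0$ is symmetric under $x\leftrightarrow y$. Applying $(\Delta_1-\Delta_N)\cdots(\Delta_{N-2}-\Delta_N)$ to the difference $\Phi$ and invoking this theorem, the support condition $|x_1|+\cdots+|x_{N-1}|=|x_N|$ combines with the symmetry in $x_{N-1},x_N$ to force $x_1=\cdots=x_{N-2}=0$ on the support, whence the resulting $\Psi$ splits as $\sum\delta\upp{\alpha}(x_1,\dots,x_{N-2})\,u_\alpha(x_{N-1},x_N)$ and everything reduces to a two-variable problem: a rotation-invariant, symmetric, homogeneous ultra-hyperbolic solution supported on $\{|x|=|y|\}$, killed off by an explicit polar-coordinate computation whose contradiction is a parity statement ($\kappa=J+\nu+n-1$ is odd, hence nonzero) that uses $n$ odd in an essential way. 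Your plan --- pass to radial variables, write the ``general'' distribution supported on the hyperplane $r_N=r_1+\cdots+r_{N-1}$ with the prescribed homogeneity, and verify that $P_N$ annihilates only zero --- omits this reduction entirely and replaces it with exactly the computation that is hard: the set in question is not a smooth hypersurface of $(\RRn)^N$ along the strata where some $x_j=0$, the correspondence between rotation-invariant distributions and distributions in $(r_1,\dots,r_N)$ is delicate there, a distribution supported on the surface is a sum of transversal derivatives of surface densities rather than ``a delta on a hyperplane,'' and no mechanism is indicated for why $P_N$ (a product of $N-1$ Bessel-type second-order operators in the radial picture) annihilates no nonzero such distribution. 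That the dimension must be odd appears nowhere in your sketch, which is a further sign that the decisive step is missing. You would need either to import the symmetry theorem and follow the paper's reduction, or to actually carry out the radial computation you allude to, including the degenerate strata and the homogeneity-at-the-origin issue (a homogeneous distribution of degree $>-2n$ supported at a point vanishes, which the paper uses for each $u_\alpha$).
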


\begin{proof}
We first  prove that $P_NE_N = \delta (x_1, \dots ,x_N)$, and  when
doing this we may assume that $N \geq 3$. Since $ \p
_t ^2 k_0(x;t) = \Delta _x k_0(x;t)$, it follows easily from (2.2)
with $N$ replaced  by $N-1$ that 
$$
\begin{gathered}
\p _t ^2 Q_{N-1}(x_1, \dots , x_{N-1};t) = \Delta _{N-1} Q_{N-1}(x_1,
\dots , x_{N-1};t)\\
 +
Q_{N-2}(x_1, \dots , x_{N-2};t)\delta (x_{N-1}).
\end{gathered}
$$
It follows  from (2.4) then that
$$
\begin{gathered}
\Delta _N E_N(x_1, \dots , x_N) = (-1)^{N-1}\int\limits _0 ^\infty Q_{N-1}(x_1, \dots
, x_{N-1};t)\p _t ^2 \dot k _0(x_N;t) \dd t \\
=(-1)^{N-1} \int\limits _0 ^\infty (\p _t ^2 Q_{N-1}(x_1, \dots, x_{N-1};t))\dot
k_0(x_N;t)\dd t \\
=(-1)^{N-1} \Delta_{N-1}\int\limits _0 ^\infty Q_{N-1 }(x_1, \dots , x_{N-1};t) \dot
k_0(x_N;t) \dd t \\
+(-1)^{N-1} \int\limits _0 ^\infty Q_{N-2}(x_1, \dots ,x_{N-2};t)\delta
(x_{N-1}) \dot k_0(x_N;t) \dd t \\
=\Delta _{N-1} E_N(x_1, \dots , x_N) -E_{N-1}(x_1, \dots ,
x_{N-2},x_N)\delta (x_{N-1}).
\end{gathered}
$$
We have proved therefore that
\begin{equation}\label{extra:1}
(\Delta _{N-1}- \Delta _{N})E_N(x_1, \dots ,x_N) = E_{N-1}(x_1, \dots ,
x_{N-2},x_N) \delta (x_{N-1}).
\end{equation}
Assuming, as we may, that the assertion has been  proved for lower
values of $N$ and letting 
$(\Delta _1- \Delta _N) \cdots (\Delta _{N-2}-\Delta _N) 
$ act on both sides of \eqref{extra:1} we may conclude  that
$P_NE_N(x_1,  \dots , x_N)  = \delta (x_1, \dots x_N) $.

The  conditions (i) and (ii) are  simple consequences of the
definitions, together with  the fact that $k_0(x;t)$ is rotation invariant in $x$
and  supported in the set where $|x|=t$. 
Since $k_0$ is homogeneous
when considered as a distribution in $x$ and $t$, it follows  
that $E_N$ is a homogeneous  distribution. 
Its degree of homogeneity must  be equal to the degree of $P_N$  minus  the dimension of
$(\RRn)^N$. 
This proves (iii).

It remains to prove that ${\Phi}=0$ if ${\Phi}={\Phi}(x_1, \dots ,
x_N) $ is a distribution satisfying the conditions in (i) -(iii) and
$P_N {\Phi} =0$.

Define
$$
\Psi (x_1, \dots , x_N) = (\Delta _1 - \Delta _N)\cdots (\Delta
_{N-2}- \Delta_N) {\Phi}(x_1, \dots , x_N)
$$
(with the interpretation ${\Psi}=E_2$ if $N=2$).
This a homogeneous distribution of degree $2- nN$ and 
$$
(\Delta _{N-1} - \Delta _N) {\Psi} =0.
$$
Since ${\Psi}$ is rotation invariant in each $x_j$,  it follows from
Theorem 10.1 of \cite{M:contemp} that
${\Psi}$ is symmetric in  $x_{N-1}, x_N$.  
Since $|x_1| + \cdots
+|x_{N-1}| = |x_N| $ in the support of ${\Psi}$  this implies that
$x_1= \cdots = x_{N-2}=0 $ in its support. Hence 
$$
{\Psi}(x_1, \dots , x_N) = \sum {\delta} \upp {\alpha} (x_1, \dots
x_{N-2}) u_{\alpha}(x_{N-1}, x_N),
$$
where the $u_{\alpha}(x,y) \in \Cal D'(\prn )$ are solutions to the
ultra-hyperbolic equation. 
The  rotation invariance of
${\Phi}$ in the $x_j$ implies that the summation takes place over
even $|{\alpha}|$ only and that the $u_{\alpha}(x,y) 
$   and rotation invariant  separately in $x$
and $y$. 
Also, $u_{\alpha}(x,y) = u_{\alpha}(y,x) $ and $u_{\alpha}$
is homogeneous of degree ${\mu}_{\alpha}$, where
$$
{\mu}_{\alpha}= 2- nN +(N-2) n +|\alpha| = 2 +|\alpha|-2n 
$$
is even. Since ${\mu}_{\alpha}>-2n$ 
the proof is completed if we prove that
  $u_{\alpha} $ vanishes outside the origin in $\prn$. 
In this set we  may  view $u_{\alpha}$ as a function $f(s,t)$ in $s=|x|,
  t=|y|$. 
Since it is supported in the set where $s=t$ we may write
$$
f(s,t) = \sum _{0 \leq j \leq J} c_j \delta \upp j(s-t) (s+t)
^{j+{\nu}}
$$
where ${\nu} =1+{\mu}_{\alpha}$ is odd, and the  summation takes place
over even $j$ only, since $f(s,t)=f(t,s)$. 
We  assume that $f \neq 0$ and
shall see that this leads to a contradiction.

 Assume now  that $c_J
\neq 0$. 
Expressing  the Laplacian in polar coordinates, we get the
equation
$$
0 = \Big (\p _s ^2 -\p _t ^2 + (n-1) (s^{-1} \p _s -t^{-1}\p _t)\Big) f(s,t).
$$
The  right-hand side here is a linear combination of $\delta \upp j
(s-t) (s+t) ^{j+\nu -2}$ with $j \leq J+1$, and a simple computation
shows that the coefficient in front of $\delta \upp{J+1}(s+t)^{J+\nu
  -1}$ is equal to 
$4c_J {\kappa}$, where
$$
{\kappa} = (J+{\nu}) +n-1.
$$
This  gives us a contradiction,  since  we know that ${\kappa}=0$ while
the  right-hand side above is an odd integer.  We have proved
therefore that $u_{\alpha}$ vanishes outside  the origin. 
\end{proof}

We need to establish estimates for the Fourier transforms of certain
cut-offs of $E_N$.
Namely, we shall consider distributions of the form
\begin{equation}\label{newton:1}
(-1)^{N-1}\int\limits_0^\infty Q_{N-1}(x_1, \dots, x_{N-1}; t) \dot{k}_0(x_N; t)
\chi(t) \dd t, \qquad N=2, 3, \dots,
\end{equation}
where $\chi\in C_0^\infty(\RR)$.
We notice that $|x_1|+\cdots +|x_{N-1}|= |x_N|< R_0$ in the support of this 
distribution whenever the support of $\chi$ is contained in the interval $(-\infty, R_0)$.
Also, if $\chi(t)=1$ when $0\le t\le  R_1$, then the restrictions to 
$(\RR^n)^{N-1} \times B(0, R_1) $ 
of the distribution in \eqref{newton:1} and of $E_N$ coincide.

We start with some preparatory computations.
When $a\in \RR$ define
$$
\varphi _a (t) = Y_+ (t) \frac {\sin (ta)}a, \qquad t \in \RR,
$$ 
where  $Y_+$ is the Heaviside's function.

\begin{lemma}\label{lemmanewton:1}
Assume $N\ge 2$ and $a_1, \dots a_N$ are  
real numbers such that $a_j^2 \ne a_k^2$ when $j\ne k$.
Then we have the identity
\begin{equation}\label{newton:2}
(\varphi _{a_1}\ast \cdots \ast \varphi _{a_N})(t) = \sum _{j=1} ^N 
\prod _{k \neq j} \frac 1{a_k^2-a_j^2}\varphi _{a_j}(t).
\end{equation}
\end{lemma}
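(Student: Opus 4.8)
The plan is to establish the convolution identity \eqref{newton:2} by induction on $N$, using the classical case $N=2$ as the base and a partial-fraction computation for the inductive step. First I would verify the base case $N=2$: a direct computation (or Laplace/Fourier transform) shows that $\varphi_{a_1}\ast\varphi_{a_2}$ solves the ODE $(\p_t^2+a_1^2)(\p_t^2+a_2^2)u = \delta$ with vanishing Cauchy data, and since $a_1^2\ne a_2^2$ the right-hand side $\frac{1}{a_2^2-a_1^2}\varphi_{a_1}+\frac{1}{a_1^2-a_2^2}\varphi_{a_2}$ is the unique such solution supported in $t\ge 0$; this is just the partial-fraction decomposition of $\frac{1}{(\tau^2+a_1^2)(\tau^2+a_2^2)}$ on the Fourier side, noting $\widehat{\varphi_a}(\tau)=(a^2-\tau^2)^{-1}$ up to normalization.

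For the inductive step, assume \eqref{newton:2} holds for $N-1$ numbers. Then
$$
\varphi_{a_1}\ast\cdots\ast\varphi_{a_N} = \Big(\sum_{j=1}^{N-1}\prod_{\substack{k\ne j\\ k\le N-1}}\frac{1}{a_k^2-a_j^2}\,\varphi_{a_j}\Big)\ast\varphi_{a_N}
= \sum_{j=1}^{N-1}\prod_{\substack{k\ne j\\ k\le N-1}}\frac{1}{a_k^2-a_j^2}\,(\varphi_{a_j}\ast\varphi_{a_N}),
$$
and I would substitute the $N=2$ identity for each $\varphi_{a_j}\ast\varphi_{a_N}$. This produces a sum of terms proportional to $\varphi_{a_j}$ ($1\le j\le N-1$) and to $\varphi_{a_N}$. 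Collecting the coefficient of $\varphi_{a_j}$ for fixed $j\le N-1$ gives immediately $\prod_{k\ne j}(a_k^2-a_j^2)^{-1}$ over all $k\ne j$, $k\le N$, as desired. The coefficient of $\varphi_{a_N}$ is
$$
\sum_{j=1}^{N-1}\ \frac{1}{a_j^2-a_N^2}\prod_{\substack{k\ne j\\ k\le N-1}}\frac{1}{a_k^2-a_j^2},
$$
and one must check this equals $\prod_{k=1}^{N-1}(a_k^2-a_N^2)^{-1}$.

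The main obstacle is precisely this last algebraic identity. The cleanest route is to recognize it as the partial-fraction expansion of a rational function in an auxiliary variable: set $b_j=a_j^2$ and consider $\frac{1}{\prod_{k=1}^{N-1}(b_k-z)}$ evaluated at $z=b_N$, expanded as $\sum_{j=1}^{N-1}\frac{1}{(b_j-z)}\prod_{k\ne j}\frac{1}{b_k-b_j}$; evaluating at $z=b_N$ and matching signs yields exactly the required equality. Equivalently, one can invoke the Lagrange-interpolation identity $\sum_{j}\prod_{k\ne j}\frac{1}{b_k-b_j}\cdot\frac{1}{b_j-b_N}=\prod_{k\le N-1}\frac{1}{b_k-b_N}$, which is standard. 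Everything else is bookkeeping of signs and products, so once this identity is in place the induction closes.
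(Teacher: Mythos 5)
Your proof is correct, but it is organized differently from the paper's. The paper proves the identity in one stroke for general $N$ on the Fourier side: it damps each factor, setting $\psi_j(t)=e^{-\ep t}\varphi_{a_j}(t)$ so that $\widehat\psi_j(\tau)=((\ep+\ii\tau)^2+a_j^2)^{-1}$, applies the partial-fraction decomposition of $\prod_j(z+a_j^2)^{-1}$ at $z=(\ep+\ii\tau)^2$, inverts, and lets $\ep\to 0$. You instead induct on $N$ on the time side: the base case $N=2$ via uniqueness of the forward fundamental solution of $(\p_t^2+a_1^2)(\p_t^2+a_2^2)$, and the inductive step reduced to the Lagrange/partial-fraction identity $\sum_{j\le N-1}(b_j-b_N)^{-1}\prod_{k\ne j}(b_k-b_j)^{-1}=\prod_{k\le N-1}(b_k-b_N)^{-1}$, which is exactly the expansion of $\prod_{k}(b_k-z)^{-1}$ evaluated at $z=b_N$; both your collection of coefficients and this identity check out. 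The two arguments share the same algebraic core (partial fractions of $\prod_j(z+a_j^2)^{-1}$), but yours avoids taking Fourier transforms of the non-decaying functions $\varphi_a$ altogether — which is the one point where your parenthetical remark that $\widehat{\varphi_a}(\tau)=(a^2-\tau^2)^{-1}$ ``up to normalization'' is slightly loose, since that formula only holds as a boundary value and is precisely what the paper's $e^{-\ep t}$ regularization is there to make rigorous. Your ODE-based justification of the base case sidesteps this, at the cost of an induction and the extra algebraic lemma; the paper's version is shorter once the regularization is accepted.
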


\begin{proof}
Let $\ep >0$ and define ${\psi}_j(t) = e^{-\ep t} \varphi _{a_j}(t)$.
A simple computation  shows that
$$
\widehat{{\psi}}_j({\tau}) = \frac 1{(\ep +\ii{\tau})^2+a_j^2}.
$$
If ${\Psi}= {\psi}_1 \ast \cdots \ast {\psi}_N$ it follows that
$$
\begin{gathered}
\widehat {\Psi}({\tau}) = \prod _1 ^N \frac 1{(\ep+\ii \tau)^2 +a_j^2}=\sum _{j=1}^N \Big ( \prod _{k\neq j}  \frac 1{a_k^2-a_j^2}\Big )
\frac 1{(\ep +i {\tau})^2 +a_j^2}\\
=  \sum _{j=1}^N \Big ( \prod _{k\neq j}  \frac 1{a_k^2-a_j^2}\Big
)\widehat  {\psi} _j({\tau})
\end{gathered}
$$
Hence 
$$
{\Psi} (t) = \sum _{j=1}^N\Big ( \prod _{k\neq j}  \frac 1{a_k^2-a_j^2}\Big
)  {\psi}_j (t).
$$
The lemma then follows when $\ep $ tends  to $0$.
\end{proof}

\begin{lemma} \label{lemmanewton:2}
When $N\ge 2$, $a_1, \dots a_N\in \RR$, $\sigma\in \CC$,  $\opn{Re} {\sigma} >0$, define
$$
F(a_1, \dots , a_{N}; {\sigma}) = \int\limits _0 ^\infty (\varphi _{a_1}
\ast \cdots \ast \varphi _{a_{N-1}})(t) \cos (ta_{N}) \ee^{-{\sigma}t}\,
\dd t.
$$
Then
\begin{equation}\label{newton:3}
F(a_1, \dots ,a_{N};{\sigma}) =
 \frac 1 2 \left(\prod\limits _{1\leq j \leq {N-1}} \frac
 1{a_j^2 -(a_N-\ii\sigma)^2}
+  \prod\limits _{1\leq j \leq {N-1}} \frac
1{a_j^2-(a_N + \ii \sigma)^2}\right). 
\end{equation}
\end{lemma}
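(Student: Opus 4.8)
The plan is to read $F$ as a Laplace transform of the one-sided function $\varphi_{a_1}\ast\cdots\ast\varphi_{a_{N-1}}$ evaluated at the two shifted points $\sigma\mp\ii a_N$, and to exploit that the Laplace transform turns convolutions of functions supported in $[0,\infty)$ into products. First I would record the elementary identity
$$
\int\limits_0^\infty \varphi _a(t)\,\ee^{-\sigma t}\dd t = \frac 1{\sigma^2+a^2}, \qquad \opn{Re}\sigma>0,
$$
which is nothing but the computation $\widehat{\psi}_j(\tau)=\bigl((\ep+\ii\tau)^2+a_j^2\bigr)^{-1}$ from the proof of Lemma~\ref{lemmanewton:1}, read with $\sigma=\ep+\ii\tau$ (alternatively a direct integration by parts). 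Since each $\varphi_{a_j}$ is supported in $[0,\infty)$ and of polynomial growth, the iterated convolution $\varphi_{a_1}\ast\cdots\ast\varphi_{a_{N-1}}$ is continuous, supported in $[0,\infty)$, and again of polynomial growth; hence Fubini's theorem applies and yields, for $\opn{Re}\sigma>0$,
$$
\int\limits_0^\infty (\varphi _{a_1}\ast\cdots\ast\varphi _{a_{N-1}})(t)\,\ee^{-\sigma t}\dd t = \prod\limits_{j=1}^{N-1}\frac 1{\sigma^2+a_j^2}.
$$

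Next I would split the cosine as $\cos(ta_N)\ee^{-\sigma t}=\tfrac12\bigl(\ee^{-(\sigma-\ii a_N)t}+\ee^{-(\sigma+\ii a_N)t}\bigr)$. Because $a_N$ is real, both exponents still have real part $-\opn{Re}\sigma<0$, so the previous display may be applied at the arguments $\sigma\mp\ii a_N$. This gives
$$
F(a_1,\dots,a_N;\sigma)=\frac 12\left(\prod\limits_{j=1}^{N-1}\frac 1{(\sigma-\ii a_N)^2+a_j^2}+\prod\limits_{j=1}^{N-1}\frac 1{(\sigma+\ii a_N)^2+a_j^2}\right),
$$
and the algebraic identity $(\sigma\mp\ii a_N)^2+a_j^2=a_j^2-(a_N\pm\ii\sigma)^2$ turns the right-hand side into exactly the expression in \eqref{newton:3} (with the two product terms interchanged, which is irrelevant).

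There is no genuinely hard step here. The only points needing a word of care are the justification of interchanging the outer $t$-integration with the convolution integrals, handled by the polynomial-growth bound and Fubini, and the remark that—unlike in Lemma~\ref{lemmanewton:1}—no genericity condition such as $a_j^2\ne a_k^2$ is required, precisely because this argument never divides by $a_k^2-a_j^2$. One could instead insert the partial-fraction expansion \eqref{newton:2} into the integral and evaluate term by term, but then \eqref{newton:3} would have to be recovered from a sum of elementary sine integrals and the distinctness hypothesis removed afterwards by continuity in the $a_j$; the Laplace-transform route above avoids all of that.
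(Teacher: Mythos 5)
Your proof is correct, and it takes a genuinely different route from the paper's. The paper first reduces by continuity to the generic case $a_j^2\ne a_k^2$, then inserts the partial-fraction expansion \eqref{newton:2} of Lemma~\ref{lemmanewton:1} into the integral, evaluates each term by the explicit formula \eqref{newton:4} for $\int_0^\infty\varphi_a(t)\cos(tb)\ee^{-\sigma t}\dd t$, and finally resums using the algebraic identity $\sum_{j}\bigl(\prod_{k\ne j}(t_k-t_j)^{-1}\bigr)(t_j\mp\ii b)^{-1}=\prod_{j}(t_j\mp\ii b)^{-1}$ with $t_j=a_j^2-a_N^2+\sigma^2$, $b=2a_N\sigma$. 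You instead read $F$ directly as a Laplace transform, use multiplicativity of the Laplace transform on convolutions of one-sided functions of polynomial growth (justified by Fubini), and split $\cos(ta_N)\ee^{-\sigma t}$ into the two exponentials $\tfrac12\ee^{-(\sigma\mp\ii a_N)t}$; the identity $(\sigma\mp\ii a_N)^2+a_j^2=a_j^2-(a_N\pm\ii\sigma)^2$ then gives \eqref{newton:3} at once. Your route is shorter: it needs no genericity hypothesis, no continuity argument to remove it, and no resummation; in fact it bypasses Lemma~\ref{lemmanewton:1} entirely (which the paper uses only here), whereas the paper's route is essentially the same Fourier/Laplace idea applied one level earlier, at the cost of passing through the partial-fraction decomposition and back.
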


\begin{proof}
Since both sides of \eqref{newton:3} depend continuously  in 
$a_1, \dots ,a_N\in \RR$ it is no restriction to assume 
that $a_j^2 \ne a_k^2$ when $j\ne k$.

First notice that when $a$, $b\in \RR$ and $\sigma \in \CC$, $\opn{Re} \sigma >0$, one has
\begin{equation}\label{newton:4}
\int\limits _0 ^\infty 
\varphi _a(t) \cos (tb) \ee^{-{\sigma}t} \, \dd t
=\frac{a^2-b^2+\sigma ^2}{(a^2-b^2+\sigma^2)^2+ 4 b^2\sigma ^2}.
\end{equation}
When $N=2$ \eqref{newton:3} follows directly from this formula.

Assume $N\ge 3$. 
The previous lemma and \eqref{newton:4} give
$$
\begin{gathered}
F(a_1, \dots , a_{N}; {\sigma}) =
\sum\limits_{j=1}^{N-1} \prod_{k\ne j}\frac 1 {a_k ^2-a_j^2}
\int\limits_0^\infty \varphi_{a_j}(t) \cos(ta_N) \ee^{-\sigma t}\dd t\\ 
=\sum _{j=1} ^{N_1} \Big ( \prod _{k\neq j}
  \frac 1 {a_k ^2-a_j^2}\Big ) \frac {a_j^2-a_{N}^2 +
  \sigma ^2}{(a_j^2-a_{N}^2 +
  \sigma ^2)^2+ 4 a_{N}^2\sigma ^2}.
\end{gathered}
$$
We can simplify this  expression  by writing
$$
t_j=a_j^2-a_{N}^2 +\sigma ^2 , \quad 0 \leq j \leq N-1, 
\qquad
\text{and}  \quad
b = 2a_{N}\sigma.
$$
Then
$$
\begin{gathered}
F(a_1, \dots , a_{N}; {\sigma}) = \sum \limits_{j=1}^{N-1} 
\Big (\prod \limits_{k \neq j}
\frac 1{t_k-t_j} \Big ) \frac {t_j}{t_j^2 + b^2}\\
=\frac 1 2 \sum _{j=1}^{N-1}
\Big (\prod _{k \neq j}\frac 1{t_k-t_j} \Big ) 
\frac {1}{t_j-\ii b}+\frac 1 2 \sum _{j=1}^N \Big (\prod _{k \neq
  j, k\leq N}\frac 1{t_k-t_j} \Big ) 
  \frac {1}{t_j+\ii b}\\
= \frac 1 2 \prod _{1\leq j \leq N} \frac 1{t_j-\ii b}+
\frac 1 2 \prod _{1\leq j \leq N} \frac 1{t_j+\ii b}.
\end{gathered}
$$
This finishes the proof of the lemma, after noticing that
$t_j\pm \ii b = a_j^2 -(a_N \mp \ii \sigma)^2$.
\end{proof}

The next lemma is a direct consequence of  Theorem 1.4.2 in \cite{H:I}.

\begin{lemma}\label{chiN:lemma}
There is a sequence $(\chi _N)_1^\infty$  in $C_0^\infty (\RR)$   such
that $\chi _N(t) =1$ when $|t| \leq 1$,  $\chi _N(t)=0$ when $|t|>2$
and
$$
|\chi _N \upp k (t)| \leq   C^k N^k, \quad 0 \leq k \leq 2N+2.
$$
Here $C >0$ is independent of $N$.
 \end{lemma}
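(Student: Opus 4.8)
The plan is to build $\chi_N$ as the convolution of the indicator of a fixed interval with a smooth probability density whose number of elementary factors grows linearly in $N$; this is precisely the mechanism behind Theorem~1.4.2 of \cite{H:I}, and one may either invoke that theorem with a suitable choice of defining sequence, or argue directly as I now sketch.

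First I would fix, once and for all, a function $\rho\in C_0^\infty(\RR)$ with $\rho\ge0$, $\opn{supp}\rho\subset[-1/4,1/4]$ and $\int\rho\dd t=1$. For $h>0$ put $g_h=(2h)^{-1}\mathbf 1_{[-h,h]}$, so that $\normrum{g_h}{L^1}=1$, and set
$$
\eta_N=\underbrace{g_h\ast\cdots\ast g_h}_{m}\ast\rho,\qquad m=2N+3,\quad h=(8N+12)^{-1},
$$
and finally $\chi_N=\mathbf 1_{[-3/2,3/2]}\ast\eta_N$. Then $\eta_N\in C_0^\infty(\RR)$, $\eta_N\ge0$, $\int\eta_N\dd t=1$ and $\opn{supp}\eta_N\subset[-(mh+1/4),mh+1/4]$, where with the chosen $h$ one has $mh=1/4$.

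The verification then splits into three elementary steps. \emph{Support and plateau.} Convolving $\mathbf 1_{[-3/2,3/2]}$ with a probability density supported in an interval of half-length $mh+1/4=1/2$ produces a function valued in $[0,1]$, equal to $1$ on $[-1,1]$ and vanishing off $[-2,2]$; and $\chi_N\in C_0^\infty$ because $\eta_N$ is. \emph{Derivative estimates for $\eta_N$.} Differentiating the convolution power moves each derivative onto a $g_h$ factor, $(g_h^{\ast m})^{(k)}=(g_h')^{\ast k}\ast g_h^{\ast(m-k)}$ for $0\le k\le m-1$, with $g_h'=(2h)^{-1}(\delta_{-h}-\delta_h)$; since $(g_h')^{\ast k}$ is a measure of total mass $h^{-k}$ and the remaining $L^1$ factors have norm $1$, this gives $\normrum{\eta_N^{(k)}}{L^1}=\normrum{(g_h^{\ast m})^{(k)}\ast\rho}{L^1}\le h^{-k}$ for $0\le k\le m-1=2N+2$. \emph{Transfer to $\chi_N$.} From $\chi_N^{(k)}=\mathbf 1_{[-3/2,3/2]}\ast\eta_N^{(k)}$ we obtain $\normrum{\chi_N^{(k)}}{L^\infty}\le\normrum{\eta_N^{(k)}}{L^1}\le h^{-k}=(8N+12)^k\le(20N)^k$ for every $N\ge1$ and $0\le k\le2N+2$, so the constant $C=20$ works, and it is independent of $N$.

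There is no real obstacle here, only one point that must be respected: a single fixed $C_0^\infty$ bump, however rescaled, has high-order derivatives growing at least factorially in the order, so it cannot be controlled by $C^kN^k$ uniformly in $k$. The resolution, exactly as in \cite{H:I}, is to use $m\sim N$ independent convolution factors each of width $\sim1/N$, so that only the $m-1\gtrsim 2N$ derivatives we actually need are bounded and each of them costs merely a factor $\lesssim N$; the final convolution with the \emph{fixed} smooth $\rho$ raises the regularity to $C^\infty$ without affecting the derivatives of order $\le m-1$.
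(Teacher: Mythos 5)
Your construction is correct and is exactly the mechanism behind Theorem~1.4.2 of H\"ormander, which is all the paper invokes (it gives no proof beyond that citation): iterate $\sim 2N$ convolutions of normalized indicators of width $\sim 1/N$, so each of the first $2N+2$ derivatives costs a factor $O(N)$, then convolve with a fixed plateau to get the cutoff. The support bookkeeping, the identity $(g_h^{\ast m})^{(k)}=(g_h')^{\ast k}\ast g_h^{\ast(m-k)}$, and the Young-inequality transfer to $\|\chi_N^{(k)}\|_{L^\infty}$ are all in order, so the proposal is a complete, self-contained proof of the special case needed.
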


In what follows $R$ is an arbitrary positive number.
We set $\chi_{N, R}(t) =\chi_{N}(t/R)$, so that $\chi_{N, 1}
=\chi_N$.
We define
\begin{equation}\label{8}
\begin{gathered}
E_{N,R} =(-1)^{N-1} \int \limits_0 ^\infty Q_{N-1}(x_1, \dots , x_{N-1};t)
\dot k_0(x_N;t) \chi _{N,R}(t)  \dd t , \quad N=2, 3 \dots.
\end{gathered}
\end{equation}

We notice that
\begin{equation}\label{9}
|x_1| + \cdots +|x_{N-1}| = |x_N| \leq 2R \quad \text{in $
  \opn{supp}(E_{N,R})$}
\end{equation} 
and
\begin{equation}\label{10}
  E_{N,R} (x_1, \dots , x_N)= E_N(x_1, \dots , x_N)  \quad \text{when
    $|x_N|\leq R$}.
\end{equation}
We shall derive estimates for the Fourier transform 
$\Cal F E_{N,R}(\xi _1, \dots ,\xi _N)$ of $E_{N,R}$.
We notice here that, due to the homogeneity of $Q_{N-1}(\cdot; t)$ and of  
$\dot k_0(\cdot;t)$ and to the definition of $\chi_{N, R}$, we have
$$
 E_{N, R}(Rx_1, \dots, R x_N)= R^{2N-2} R^{-Nn} E_{N, 1}(x_1, \dots x_N).
$$
It follows that
\begin{equation}\label{homog}
 (\Cal F E_{N,R})(\xi _1, \dots , \xi _N) = R^{2N-2}  {\Cal F} E_{N, 1}(R\xi_1, \dots, R\xi_N).
\end{equation}
Therefore it is enough to establish estimates 
for ${\Cal F} E_{N, 1}$.

The distribution  $E_{N, 1}(x_1,. \dots , x_N)$ is rotation invariant in the
variables $x_1, \dots , x_N$ and compactly supported.
The Fourier transform $\Cal F E_{N,1}(\xi _1, \dots
,\xi _N)$ of $E_{N,1}$ is 
smooth and rotation invariant in each variable $\xi _j$.
We
define $F_{N}(r_1, \dots , r_N)$ when $r _j \geq 0$ by
\begin{equation}\label{newton:1001}
(\Cal F E_{N,1})(\xi _1, \dots , \xi _N) = F_{N}(r_1, \dots ,r_N)
\quad \text{when $r_j=|\xi _j|$.}
\end{equation}
Hence we need estimates of $F_N$.

Consider  $\gamma> 0$. 
Let us define the  functions  $h_\gamma(r,s)$ through
$$
h_\gamma(r,s) = (\gamma+|r-s|)^{-1}(\gamma+|r+s|)^{-1}.
$$

\begin{lemma}\label{ineq:lemma}
When $s$, $t\in \RR$, one has
$$ 
1+|s-t|\ge \frac {1+|s|}{1+|t|}.
$$
Consequently
$$
h_\gamma(s, r+t) \le \gamma^{-2} (\gamma+|t|)^2 h_\gamma(s, r)
$$ 
when $s$, $t$, $r\in \RR$.
\end{lemma}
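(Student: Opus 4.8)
The plan is to establish the first inequality by an elementary manipulation and then obtain the second by rescaling and applying the first to each of the two factors in the definition of $h_\gamma$.

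First I would prove $1+|s-t|\ge (1+|s|)/(1+|t|)$. Since $1+|t|>0$ this is equivalent to $(1+|s-t|)(1+|t|)\ge 1+|s|$. Expanding the left-hand side gives $1+|t|+|s-t|+|s-t|\,|t|$, and since $|s|\le |s-t|+|t|$ by the triangle inequality while $|t|$ and $|s-t|\,|t|$ are nonnegative, the inequality follows at once.

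Next, replacing $s$ by $s/\gamma$ and $t$ by $t/\gamma$ in this inequality (legitimate for any $\gamma>0$) and multiplying through by $\gamma$, I obtain $\gamma+|s-t|\ge \gamma(\gamma+|s|)/(\gamma+|t|)$ for all real $s,t$, that is, $(\gamma+|s-t|)^{-1}\le \gamma^{-1}(\gamma+|t|)(\gamma+|s|)^{-1}$. Now I apply this last estimate to each factor in $h_\gamma(s,r+t)=(\gamma+|s-r-t|)^{-1}(\gamma+|s+r+t|)^{-1}$. Writing $s-(r+t)=(s-r)-t$ gives $(\gamma+|s-r-t|)^{-1}\le \gamma^{-1}(\gamma+|t|)(\gamma+|s-r|)^{-1}$, and writing $s+(r+t)=(s+r)-(-t)$ together with $|-t|=|t|$ gives $(\gamma+|s+r+t|)^{-1}\le \gamma^{-1}(\gamma+|t|)(\gamma+|s+r|)^{-1}$. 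Multiplying these two bounds yields
$$
h_\gamma(s,r+t)\le \gamma^{-2}(\gamma+|t|)^2(\gamma+|s-r|)^{-1}(\gamma+|s+r|)^{-1}=\gamma^{-2}(\gamma+|t|)^2 h_\gamma(s,r),
$$
which is the asserted estimate.

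There is essentially no obstacle here; the computation is routine. The only points that deserve a moment's care are the symmetry of $h_\gamma$ in its two arguments (which makes it harmless to peel the shift $t$ off the second slot) and the trivial sign flip $|-t|=|t|$ used in the second factor.
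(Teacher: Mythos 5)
Your proof is correct and follows essentially the same route as the paper: the first inequality is the triangle inequality $|s|\le|s-t|+|t|$ dressed up (the paper writes the chain $1+|s-t|\ge 1+\frac{|s-t|}{1+|t|}\ge 1+\frac{|s|-|t|}{1+|t|}=\frac{1+|s|}{1+|t|}$, which is your cross-multiplied argument), and the second estimate follows by the rescaling $s\mapsto s/\gamma$, $t\mapsto t/\gamma$ applied to each of the two factors of $h_\gamma$, exactly as you do. The paper leaves that second step implicit, so your explicit write-out is a faithful completion rather than a different proof.
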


\begin{proof}
The lemma follows from the inequalities
$$
\begin{gathered}
1+|s-t|  \geq 
1 + \frac{|s-t|}{1+|t|}\ge 1 + \frac{|s|-|t|}{1+|t|}
=\frac{1+|s|}{1+|t|}.
\end{gathered}
$$
\end{proof}

The estimate of $F_{N}$ that we need is contained in the next lemma.
\begin{lemma}\label{lemmanewton:3}
There is a constant $C$,  which does not depend on $N$ and $\gamma$, 
such that 
\begin{equation}\label{11}
|F_{N} (r_1, \dots , r_N)| \leq C^N N^{2N+1} \gamma^{-(2N+1)} \ee^{2\gamma} 
\prod _{1 \leq j \leq N-1}h_\gamma (r_j,r_N).
\end{equation}
\end{lemma}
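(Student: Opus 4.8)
The plan is to compute the Fourier transform of $E_{N,1}$ explicitly, reduce the estimate to the closed formula of Lemma~\ref{lemmanewton:2}, and then integrate that formula against the Fourier transform of a weight built from $\chi_N$.

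First I would Fourier transform the definition \eqref{8}. Since $K_0(t)=\sin(t|D|)/|D|$, the convolution kernel $k_0$ satisfies $\widehat{k_0}(\xi;t)=\sin(t|\xi|)/|\xi|=\varphi_{|\xi|}(t)$ and $\widehat{\dot k_0}(\xi;t)=\cos(t|\xi|)$, while \eqref{en:2} is a convolution in $t$; hence, by induction, $\widehat{Q_{N-1}}(\xi_1,\dots,\xi_{N-1};t)=(\varphi_{|\xi_1|}\ast\cdots\ast\varphi_{|\xi_{N-1}|})(t)$. Combined with \eqref{newton:1001} this gives
$$
F_N(r_1,\dots,r_N)=(-1)^{N-1}\int_0^\infty(\varphi_{r_1}\ast\cdots\ast\varphi_{r_{N-1}})(t)\cos(tr_N)\,\chi_N(t)\,\dd t,
$$
the integral effectively being over $0\le t\le2$ since $\opn{supp}\chi_N\subseteq[-2,2]$.

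Next I would introduce the Laplace transform $F$ of Lemma~\ref{lemmanewton:2}. Writing $\chi_N(t)=\ee^{-\gamma t}\psi_N(t)$ with $\psi_N(t):=\ee^{\gamma t}\chi_N(t)\in C_0^\infty(\RR)$ and inserting $\psi_N(t)=(2\pi)^{-1}\int_\RR\widehat{\psi_N}(\tau)\ee^{\ii t\tau}\dd\tau$, Fubini's theorem — legitimate since $(\varphi_{r_1}\ast\cdots\ast\varphi_{r_{N-1}})(t)\ee^{-\gamma t}$ is integrable and $\widehat{\psi_N}$ rapidly decreasing — turns the inner $t$-integral into $F(r_1,\dots,r_N;\gamma-\ii\tau)$ (note $\opn{Re}(\gamma-\ii\tau)=\gamma>0$), so
$$
F_N(r_1,\dots,r_N)=\frac{(-1)^{N-1}}{2\pi}\int_\RR\widehat{\psi_N}(\tau)\,F(r_1,\dots,r_N;\gamma-\ii\tau)\,\dd\tau ;
$$
formula \eqref{newton:3} holds for all real arguments, so no genericity assumption on the $r_j$ is needed. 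I would then bound $F$ pointwise: for $\sigma=\gamma-\ii\tau$ one has $r_N\mp\ii\sigma=(r_N\mp\tau)\mp\ii\gamma$, so factoring $r_j^2-(r_N\mp\ii\sigma)^2$ and using $|a\pm\ii\gamma|\ge2^{-1/2}(\gamma+|a|)$ gives $|r_j^2-(r_N\mp\ii\sigma)^2|\ge\tfrac12\bigl(\gamma+|\tau\mp(r_N-r_j)|\bigr)\bigl(\gamma+|\tau\mp(r_N+r_j)|\bigr)$; applying Lemma~\ref{ineq:lemma} in the scaled form $\gamma+|s-t|\ge\gamma(\gamma+|s|)/(\gamma+|t|)$ (with shift $t=\pm\tau$) to each factor yields
$$
|F(r_1,\dots,r_N;\gamma-\ii\tau)|\le\Bigl(\frac{2(\gamma+|\tau|)^2}{\gamma^2}\Bigr)^{N-1}\prod_{1\le j\le N-1}h_\gamma(r_j,r_N).
$$

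The remaining task — integrating this against $|\widehat{\psi_N}|$ — is where I expect the main difficulty. The bound just displayed grows in $\tau$, so one cannot merely multiply by $\normrum{\widehat{\psi_N}}{L^1}$; instead I would split $\int_\RR\dd\tau$ at $|\tau|\sim\gamma$. On $|\tau|\lesssim\gamma$ the factors $(\gamma+|\tau|)/\gamma$ are bounded, so $|F(r_1,\dots,r_N;\gamma-\ii\tau)|\le C^N\prod_jh_\gamma(r_j,r_N)$ there, while $\int_{|\tau|\lesssim\gamma}|\widehat{\psi_N}|\le2\gamma\normrum{\psi_N}{L^1}\lesssim\ee^{2\gamma}$, using $|\ee^{\gamma t}|\le\ee^{2\gamma}$ on $\opn{supp}\chi_N$. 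On $|\tau|\gtrsim\gamma$ one keeps the genuine $O(|\tau|^{-2(N-1)})$ decay of $F(r_1,\dots,r_N;\gamma-\ii\tau)$ visible in \eqref{newton:3}, together with the decay of $\widehat{\psi_N}$: from Leibniz' rule, $|\ee^{\gamma t}|\le\ee^{2\gamma}$, and the estimates $|\chi_N^{(l)}|\le C^lN^l$ of Lemma~\ref{chiN:lemma} (valid up to order $2N+2$), one gets $\normrum{\psi_N^{(m)}}{L^1}\le C\ee^{2\gamma}(\gamma+CN)^m$, hence $|\tau|^m|\widehat{\psi_N}(\tau)|\lesssim\ee^{2\gamma}(\gamma+CN)^m$ for $m\le2N$. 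Carrying these estimates, and the factor $\gamma^{-(2N-2)}$ from the pointwise bound, through the two pieces should produce the constant $C^NN^{2N+1}\gamma^{-(2N+1)}\ee^{2\gamma}$ of \eqref{11}. The delicate — and principal — obstacle is to organize this split so that the powers of $N$ and $\gamma$ emerge exactly as stated rather than worse; a direct estimate that passes through $\int_\RR|\widehat{\psi_N}(\tau)|(\gamma+|\tau|)^{2N-2}\dd\tau$ alone is too lossy.
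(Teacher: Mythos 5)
Your first two steps coincide with the paper's proof: the identity $F_N(r_1,\dots,r_N)=(2\pi)^{-1}\int_\RR\widehat{\psi_N}(\tau)\,F(r_1,\dots,r_N;\gamma-\ii\tau)\,\dd\tau$ with $\psi_N=\ee^{\gamma t}\chi_N$ is exactly the paper's Parseval step, the pointwise bound on $F$ via Lemma~\ref{lemmanewton:2} giving products of $h_\gamma(r_j,r_N\mp\tau)$, and the use of Lemma~\ref{ineq:lemma} to convert these into $\gamma^{-2}(\gamma+|\tau|)^2h_\gamma(r_j,r_N)$ are all correct and are what the paper does. The problem is the final $\tau$-integration, which you flag as the principal obstacle but then organize in a way that cannot close — and your diagnosis of the difficulty is backwards.

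The ``direct estimate that passes through $\int_\RR|\widehat{\psi_N}(\tau)|(\gamma+|\tau|)^{2N-2}\,\dd\tau$'', which you dismiss as too lossy, is precisely the paper's argument, and it works: writing $\widehat{\psi_N}(-\tau)=\int\chi_N(t)\ee^{(\gamma+\ii\tau)t}\dd t=(\gamma+\ii\tau)^{-(2N+2)}\int\chi_N^{(2N+2)}(t)\ee^{(\gamma+\ii\tau)t}\dd t$ — i.e.\ integrating by parts against the \emph{full} exponential so that every derivative lands on $\chi_N$, where Lemma~\ref{chiN:lemma} applies up to order exactly $2N+2$ — gives $|\widehat{\psi_N}(\tau)|\le C^NN^{2N+2}\ee^{2\gamma}(\gamma+|\tau|)^{-2N-2}$ with no $\gamma$ in the numerator. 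Against the growth factor $\gamma^{-2(N-1)}(\gamma+|\tau|)^{2N-2}$ this leaves $(\gamma+|\tau|)^{-4}$, whose integral is $\tfrac23\gamma^{-3}$, and $\gamma^{-2(N-1)-3}=\gamma^{-(2N+1)}$, $N^{2N+2}\le 2^NN^{2N+1}$: the lemma follows. Your split, by contrast, loses uniformity in $\gamma$ at two points. On $|\tau|\lesssim\gamma$ you discard all decay of $\widehat{\psi_N}$; that piece contributes at best $C^N\gamma\,\ee^{2\gamma}\prod_jh_\gamma(r_j,r_N)$ (note $2\gamma\normrum{\psi_N}{L^1}\lesssim\gamma\ee^{2\gamma}$, not $\ee^{2\gamma}$), which exceeds the claimed $C^NN^{2N+1}\gamma^{-(2N+1)}\ee^{2\gamma}\prod_jh_\gamma(r_j,r_N)$ as soon as $\gamma\gg N$ — a range that matters, since the lemma is later applied with $\gamma$ of order $N/R$ and $R$ arbitrarily small. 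Second, your Leibniz-rule bound $|\tau|^m|\widehat{\psi_N}(\tau)|\lesssim\ee^{2\gamma}(\gamma+CN)^m$ carries a spurious $\gamma$ in the numerator (and stops at $m\le 2N$ rather than $2N+2$), so the high-frequency piece also fails for large $\gamma$. The fix is simply to abandon the split and the Leibniz computation: take all $2N+2$ integrations by parts against $\ee^{(\gamma+\ii\tau)t}$ and estimate the single integral directly.
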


\begin{proof}
 It
follows from \eqref{8} and \eqref{en:2}  that
\begin{equation}\label{13}
F_{N}(r_1, \dots ,r_N) = (-1)^{N-1}\int\limits _{-\infty
} ^\infty {\Phi}_N(r_1, \dots , r_N,t)\chi _{N}(t) \, \dd t
\end{equation}
where
$$
{\Phi}_N(r_1, \dots , r_N,t)= (\varphi _{r_1} \ast \cdots \ast \varphi
_{r_{N-1}})(t)\cos (tr_N).
$$
As a function of $t$, ${\Phi}_N(r_1, \dots ,r_N,t)$   is supported 
in $[0,\infty)$ and  of polynomial growth
at infinity. 
 
Define
$$
\widetilde{{\Phi}}_{N, \gamma}(r_1, \dots , r_N,t)= 
\ee^{-\gamma t}{\Phi}_N(r_1, \dots , r_N,t), \quad
\widetilde{\chi} _{N, \gamma}(t) = \ee^{\gamma t}{\chi} _{N}(t).
$$
Then
\begin{equation}\label{14}
\begin{gathered}
F_{N}(r_1, \dots , r_N)= 
\int\limits _{\RR} \widetilde {\Phi} _{N, \gamma}(r_1, \dots ,r_N,t) 
\widetilde \chi _{N,\gamma}(t) \, \dd t \\
=(2\pi )^{-1}\int\limits _{\RR}
(\Cal F \widetilde {\Phi} _{N, \gamma})(r_1, \dots ,
r_N,{\tau}) (\Cal F \widetilde \chi _{N,\gamma})(-{\tau})\, \dd {\tau},
\end{gathered}
\end{equation}
where the Fourier transform is taken  in the variable $t$. 
We notice that
$$
(\Cal F \widetilde {\Phi}_{N, \gamma} )(r_1, \dots ,r_N, \tau) = 
\int  {\Phi}_N(r_1,
\dots r_N, t) \ee^{-{\sigma}t}\dd t
=F(r_1, \dots ,r_N; \sigma),
\quad {\sigma} = \gamma+\ii{\tau}.
$$
Then  an application of Lemma~\ref{lemmanewton:2} gives  the estimate
\allowdisplaybreaks
\begin{gather}
|(\Cal F \widetilde {\Phi}_{N, \gamma})(r_1, \dots , r_N,{\tau})| 
\label{15}
\\
\leq \frac 1 2 \prod _{1\leq j \leq N-1}|r_j^2
-(r_N+\ii{\sigma})^2|^{-1}
+\frac 1 2 \prod _{1\leq j \leq N-1}|r_j^2
-(r_N-\ii{\sigma})^2|^{-1}
\nonumber
\\
=\frac  1 2 \prod _{1\leq j \leq N-1} |r_j-(r_N-\tau)-\ii\gamma|^{-1}
|r_j+(r_N-\tau)+\ii\gamma|^{-1}
\nonumber
\\
+\frac  1 2 \prod _{1\leq j \leq N-1} |r_j-(r_N+\tau)-\ii\gamma|^{-1}
|r_j+(r_N+\tau)+\ii\gamma|^{-1}
\nonumber
\\
\leq 2^{N-2}\prod _{1\leq j \leq
  N-1}(\gamma+|r_j-(r_N-\tau)|)^{-1}(\gamma+|r_j+(r_N-\tau)|)^{-1} 
\nonumber
\\
+ 2^{N-2}\prod _{1\leq j \leq
  N-1}(\gamma+|r_j-(r_N+\tau)|)^{-1}(\gamma+|r_j+(r_N+\tau)|)^{-1}
\nonumber
\\
= 2^{N-2}\prod _{1\leq j \leq N-1}h_\gamma(r_j,r_N-{\tau})+ 2^{N-2}\prod _{1\leq j \leq N-1}h_\gamma(r_j,r_N+{\tau}).
\nonumber
\end{gather}

Next we see that
$$
\begin{gathered}
\Cal F \widetilde \chi _{N, \gamma} (-{\tau}) =\int \chi _{N, \gamma}(t)
\ee^{t(\gamma+\ii{\tau})}\, \dd t\\
=(\gamma+\ii{\tau}) ^{-(2N+2) }\int \chi _{N,\gamma} \upp {2N+2}(t)
\ee^{t(\gamma+\ii{\tau})}\, \dd t.
\end{gathered}
$$ 
From this and Lemma~\ref{chiN:lemma} we deduce that there is a constant  $C$,  which is
independent of $N$ and $\gamma$, such that
$$
|\Cal F \widetilde \chi _{N,\gamma}(-{\tau})| \leq
C^N N^{2N+2} \ee^{2\gamma}(\gamma +|\tau|)^{-2N-2}.
$$

Then \eqref{14}, \eqref{15} and the above inequality, together with Lemma~\ref{ineq:lemma}, give
$$
\begin{gathered}
|F_{N} (r_1, \dots , r_N)| \leq 
C^N N^{2N+2} \ee^{2\gamma}
\int\limits _{-\infty}^\infty
(\gamma +|\tau|)^{-2N-2}
\Big (\prod _{1 \leq j \leq N-1}h_\gamma(r_j,r_N-\tau)\Big
) \dd{\tau}\\
\le
C^N N^{2N+2} \gamma^{-2(N-1)}  \ee^{2\gamma}
(\int\limits _{-\infty}^\infty
(\gamma +|\tau|)^{-4} \dd\tau)
\Big (\prod _{1 \leq j \leq N-1}h_\gamma(r_j,r_N)\Big)
\\
\le 
C^N \gamma^{-(2N+1)} N^{2N+2} \ee^{2\gamma}
\prod _{1 \leq j \leq N-1}h_\gamma(r_j,r_N) 
\\
\le 
(2C)^N \gamma^{-(2N+1)} N^{2N+1} \ee^{2\gamma}
\prod _{1 \leq j \leq N-1}h_\gamma(r_j,r_N).
\end{gathered}
$$
This finishes the proof.
\end{proof}

The following theorem gives the estimate we need for the Fourier transform of $E_{N, R}$.

\begin{theorem}\label{newtonthm}
There is a constant  $C>0$, which depends on $n$ only, such that  
$$ 
| ({\Cal F}E_{N, R})(\xi_1,\dots  ,\xi_N)|\le 
C^N  (N/(R\gamma))^{2N+1} e^{2 R \gamma} \prod\limits_{1\le j \le N-1} 
h_\gamma(|\xi_j|, |\xi_N|),\qquad  \xi_1, \dots \xi_N\in \RR^n $$ 
for every $N\ge 2$, $R>0$ and $\gamma>0$.
\end{theorem}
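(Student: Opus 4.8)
The proof is a scaling argument that transfers the estimate for $F_{N}$ from Lemma~\ref{lemmanewton:3} to $\Cal F E_{N,R}$ by means of the homogeneity relation \eqref{homog}. First I would combine \eqref{homog} with the definition \eqref{newton:1001} of $F_N$ (note that $E_{N,1}$ is rotation invariant in each variable, so $\Cal F E_{N,1}$ is a function of $|\xi_1|,\dots,|\xi_N|$ only) to write
\[
(\Cal F E_{N,R})(\xi_1,\dots,\xi_N) = R^{2N-2}\, F_N(R|\xi_1|,\dots,R|\xi_N|),\qquad \xi_1,\dots,\xi_N\in\RRn .
\]
Next I would apply Lemma~\ref{lemmanewton:3}, but with the free positive parameter there (called $\gamma$ in the lemma) replaced by $R\gamma$; this is legitimate since that lemma holds for every positive value of the parameter, and under this substitution the factor $\ee^{2\gamma}$ becomes $\ee^{2R\gamma}$. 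This gives
\[
|F_N(R|\xi_1|,\dots,R|\xi_N|)| \le C^N N^{2N+1}(R\gamma)^{-(2N+1)}\,\ee^{2R\gamma}\prod_{1\le j\le N-1} h_{R\gamma}(R|\xi_j|,R|\xi_N|).
\]

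The only thing left to check carefully is the scaling of $h_\gamma$. Directly from the definition $h_\gamma(r,s)=(\gamma+|r-s|)^{-1}(\gamma+|r+s|)^{-1}$ one has $h_{R\gamma}(Rr,Rs)=R^{-2}h_\gamma(r,s)$ for all $r,s\ge 0$ and $R>0$. Hence the product over $j=1,\dots,N-1$ in the last display equals $R^{-2(N-1)}\prod_{1\le j\le N-1}h_\gamma(|\xi_j|,|\xi_N|)$. Multiplying by the prefactor $R^{2N-2}$ coming from the homogeneity relation cancels this power of $R$ exactly, since $R^{2N-2}R^{-2(N-1)}=1$, and $N^{2N+1}(R\gamma)^{-(2N+1)}=(N/(R\gamma))^{2N+1}$. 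Putting the three ingredients together yields precisely the claimed bound; the dependence of the constant on $n$ only (in fact it is absolute) is inherited from Lemma~\ref{lemmanewton:3}.

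I do not expect a genuine obstacle here: all of the analytic content sits in Lemmas~\ref{lemmanewton:1}--\ref{lemmanewton:3}, and what remains is the bookkeeping of two scalings — that of $E_{N,R}$ in \eqref{homog} and that of $h_\gamma$ — which have been set up exactly so as to cancel. If anything, the point requiring a moment's care is simply making sure the powers of $R$ match, i.e.\ that the $R^{2N-2}$ from \eqref{homog} and the $R^{-2(N-1)}$ from the $N-1$ factors of $h_\gamma$ combine to $R^0$, while the remaining $R^{-(2N+1)}$ is absorbed together with $N^{2N+1}$ into the factor $(N/(R\gamma))^{2N+1}$.
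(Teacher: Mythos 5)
Your proposal is correct and follows essentially the same route as the paper: combine the homogeneity relation \eqref{homog} with Lemma~\ref{lemmanewton:3} and the scaling identity $h_{R\gamma}(Rr,Rs)=R^{-2}h_\gamma(r,s)$, so that the $R^{2N-2}$ from \eqref{homog} cancels against the $R^{-2(N-1)}$ from the $N-1$ factors of $h$. The only cosmetic difference is that you substitute $\gamma\mapsto R\gamma$ at the outset, whereas the paper first proves the bound with $h_{\gamma/R}$ and relabels $\gamma/R$ as $\gamma$ at the end; the content is identical.
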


\begin{proof}
Let $R>0 $.
The identity \eqref{homog} and  previous lemma show 
that there is a constant $C>0$, which depends on $n$ only, such that
$$ 
| ({\Cal F}E_{N, R})(\xi_1,\dots  ,\xi_N)|\le 
C^N (N/\gamma)^{2N+1}R^{2N-2} e^{2\gamma} \prod\limits_{1\le j \le N-1} 
h_\gamma(R|\xi_j|, R|\xi_N|),$$ 
when $\xi_1, \dots ,\xi_N\in \RR^n $, 
for every $N\ge 2$, $R>0$ and  $\gamma>0$.
This in turn shows that, with the same $C$, one has
$$
| ({\Cal F}E_{N, R})(\xi_1,\dots  ,\xi_N)|\le 
C^N  (N/\gamma)^{2N+1} 
e^{2\gamma} \prod\limits_{1\le j \le N-1} 
h_{\gamma/R}(|\xi_j|, |\xi_N|).
$$ 
The theorem follows by replacing $\gamma/R$ by $\gamma$.
\end{proof}

\section{$L^2$-Sobolev estimates for $B_N$}

We introduce an $N$-linear version of $B_N$, $N\ge 2$.
Namely,  for $\vec v=(v_1, \dots, v_N)$, $v_j \in C_0^\infty(\RR^n)$, define
\begin{equation}\label{bn:0001}
\begin{gathered}
(\BB_N\vec v)(x) =  
\int\limits_{(\RR^n)^{N}}
E_{N}(y_1, \dots, y_{N})\\
 v_1(x-\frac{y_N}{2} -Y_0)
v_2(x-\frac{y_N}{2} -Y_1)
\cdots v_N(x-\frac{y_N}{2} -Y_{N-1}) \dd \vec{y}.
\end{gathered}
\end{equation}
Here the $Y_k$:s are defined as in Proposition~\ref{prop:en1}, that is, 
$$ 
Y_0 = \frac{1}{2} \sum\limits_{1}^{N-1} y_j, \qquad
Y_k = Y_0 -\sum\limits_{j=1}^k y_j, \quad k=1, \dots, N-1.
$$ 
Then $\BB_N\vec v$ is a smooth compactly supported function in $\RR^n$
and  $B_Nv = {\BB}_{N}(v, \dots, v)$ for every $v\in C_0^\infty(\RR^n)$.
Therefore the result in Theorem~\ref{mainthm:1}  is contained in the next theorem.
Here and in the rest of the section we use the notation 
$$
m=\frac{n-3}{2}.
$$

\begin{theorem}\label{mainthm:2}
Assume that $0<\ep<1$,  $s_j \ge m$ and 
$a_j = \min (s_j-m, 1-\ep)$, $j=1, \dots ,N$. Set
$$ \sigma= \min(s_j-a_j) +\sum\limits_{j=1}^N a_j.$$
Then there is a constant $C$ which is independent of the $s_j$, 
but may depend 
on $\ep$ and $n$,  such that 
\begin{equation}\label{main:1}
\normrum{\BB_N\vec v}{H_{(\sigma )}(B(0, R))}^2 \leq C^N  
N^{2\min(s_j-a_j-m) }(R/N)^{N-1} \prod_1^N \normrum{v_j}{(s_j)}^2, 
\end{equation}
for every $N\ge 2$, $R>0$, $v_1, \dots ,v_N\in C_0^\infty(B(0, R))$.
\end{theorem}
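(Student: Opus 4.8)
The plan is to estimate $\BB_N\vec v$ by taking the Fourier transform of the formula \eqref{bn:0001} and using the bound for $\Cal F E_{N,R}$ from Theorem~\ref{newtonthm}. First I would observe that, after the change of variables implicit in \eqref{bn:0001}, the function $\BB_N\vec v$ is a superposition over $\vec y$ of products of translates of the $v_j$; Fourier transforming in $x$ turns this into a convolution-type integral. Concretely, writing each $v_j$ in terms of $\hat v_j$, one gets
$$
\widehat{\BB_N\vec v}(\xi) = c_n \int \delta\Big(\xi - \sum_{j=1}^N \eta_j\Big)\,
(\Cal F E_N)(\ell_1(\vec\eta), \dots, \ell_N(\vec\eta))\, \prod_{j=1}^N \hat v_j(\eta_j)\, d\vec\eta,
$$
where the $\ell_k$ are fixed linear forms in $\eta_1,\dots,\eta_N$ coming from the affine substitutions $x-\tfrac{y_N}{2}-Y_k$ in the proposition. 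Since $v_j$ is supported in $B(0,R)$, I may replace $E_N$ by $E_{N,cR}$ for a suitable constant $c$ (using \eqref{9}–\eqref{10} and the support condition $|x_1|+\dots+|x_{N-1}|=|x_N|\le 2R$), so that Theorem~\ref{newtonthm} applies with this $R$.

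Next I would insert the bound from Theorem~\ref{newtonthm}, choosing $\gamma$ optimally — the natural choice is $\gamma \sim N/R$, which makes $e^{2R\gamma}$ and $(N/(R\gamma))^{2N+1}$ both $O(C^N)$, leaving the essential factor $\prod_{1\le j\le N-1} h_\gamma(|\ell_j|,|\ell_N|)$. To estimate $\normrum{\BB_N\vec v}{H_{(\sigma)}(B(0,R))}^2$ I would multiply by $\hake{\xi}^{2\sigma}$ and integrate; the key is to distribute the weight $\hake{\xi}^{2\sigma}$ among the factors $\hake{\eta_j}^{2s_j}$ that I want to pull out (to recover $\normrum{v_j}{(s_j)}^2$) and the factors $h_\gamma$ that I want to integrate away. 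Using Lemma~\ref{ineq:lemma} (and the elementary inequality $\hake{\xi}\lesssim \prod\hake{\eta_j}$ on the support of the $\delta$), one trades $a_j$ powers of $\hake{\eta_j}$ against the decay in $h_\gamma$: each factor $h_\gamma(|\ell_j|,|\ell_N|)$ behaves like $\hake{\ell_j}^{-1}\hake{\ell_N}^{-1}$ away from the characteristic variety, and the loss of $a_j<1-\ep$ derivatives per factor is exactly what keeps the resulting $\eta$-integral convergent, producing a factor $C^N$ with the $R$ and $N$ dependence quoted. The role of $m=(n-3)/2$ is that $h_\gamma(r,s)$ has only two powers of decay, so after using $s_j-m$ of them to gain Sobolev regularity $s_j$ in each $v_j$ one needs $s_j\ge m$ for the bookkeeping to close; the borderline loss of $\ep$ at $a_j=1-\ep$ reflects that $h_\gamma\sim\hake{\cdot}^{-2}$ is only barely non-integrable in one dimension.

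To organize the integration I would integrate out the variables $\eta_1,\dots,\eta_{N-1}$ one at a time (or equivalently, recognize the whole $\vec\eta$-integral as an iterated convolution), using Cauchy–Schwarz at each stage to split off $\normrum{v_j}{(s_j)}$ and estimating the remaining weighted $h_\gamma$-kernels in $L^2$ or $L^1$. The per-step gain is a uniform constant $C=C(n,\ep)$ together with a factor like $\gamma^{-1}\sim R/N$ from the $h_\gamma$-integrations; accumulating $N-1$ such steps gives the $(R/N)^{N-1}$ in \eqref{main:1}, while the residual power of $N$ in front, namely $N^{2\min(s_j-a_j-m)}$, comes from the one factor whose decay is not fully consumed — the minimizing index $j_0$ realizing $\min(s_j-a_j)$, for which $s_{j_0}-a_{j_0}-m$ extra powers of $\hake{\cdot}$ survive and get converted to powers of $\gamma\sim N/R$ via Lemma~\ref{ineq:lemma}. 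The main obstacle is the combinatorial/geometric bookkeeping of how the linear forms $\ell_k(\vec\eta)$ interact with the characteristic variety of $P_N$ — i.e.\ making the pointwise estimate $\prod h_\gamma(|\ell_j|,|\ell_N|)\le (\text{integrable kernel})\cdot\prod\hake{\eta_j}^{-2a_j}$ precise and uniform in $N$, since the $h_\gamma$ factors are singular along $|\ell_j|=|\ell_N|$ and one must check that the substitutions are non-degenerate enough that these singularities are integrable after the weight redistribution. Once that pointwise estimate is in hand, the remaining steps are routine Cauchy–Schwarz and one-dimensional integrals, and the constants assemble into the stated bound; Theorem~\ref{mainthm:1} then follows by setting $v_1=\dots=v_N=v$, $s_1=\dots=s_N=s$, $a_j=\min(s-m,1-\ep)$, and choosing $\ep$ so that $a=\sum a_j$-type quantities match, noting $\sigma\ge s+a$ in that case.
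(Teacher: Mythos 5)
Your proposal follows essentially the same route as the paper: Fourier transforming the $N$-linear formula, replacing $E_N$ by the cut-off $E_{N,cR}$ via the support considerations of Lemma~\ref{lemma:5.2}, invoking Theorem~\ref{newtonthm} with the optimal choice $\gamma\sim N/R$, and then reducing to iterated one-dimensional integrals of weighted $h_\gamma$-kernels — which is exactly how the paper organizes things through the quantity $A(N,R,\vec s,\sigma)$, the splitting of $M_{\vec s}$, and the key integral estimate of Lemma~\ref{5.4} (whose $\gamma^{-1}\hake{\rho}^{2m-2s}$ gain, iterated $N-1$ times, produces the $(R/N)^{N-1}$, while the $N^{2\min(s_j-a_j-m)}$ factor arises just as you say from $\max_j$ of the increments being $\ge 2|\xi_N|/N$). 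The "main obstacle" you flag is resolved in the paper not by a pointwise kernel bound but by one global Cauchy--Schwarz followed by the recursive integral estimate \eqref{pn:12}; this is an organizational difference only, and your identification of the roles of $m$, $a_j<1-\ep$, and the minimizing index is accurate.
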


The present section is devoted to the proof of the above result. 
We start with some preparations.

Let $R >0$ and recall that the distributions $E_{N,R}\in \Cal
E'((\RRn)^N) $ were defined in \eqref{8}. 
When $\vec v=(v_1, \dots , v_N)$, $v_j\in C_0^\infty(\RR^n)$, we consider 
\begin{equation}\label{def:betaN}
\begin{gathered}
(\BB _{N,R}\vec v)(x) = 
\int\limits_{(\RR^n)^{N}}E_{N,R}(y_1, \dots, y_{N})
\\
 v_1(x-\frac{y_N}{2} -Y_0)
v_2(x-\frac{y_N}{2} -Y_1)
\cdots v_N(x-\frac{y_N}{2} -Y_{N-1}) \dd \vec{y}. 
\end{gathered}
\end{equation}
It is easy to see that $\BB _{N,R}\vec v$ is a smooth 
compactly supported function in $\RRn$.
The following
lemma  gives the connection between $\BB _{N,R}\vec v
 $ and $\BB _{N}\vec v$.

\begin{lemma}\label{lemma:5.2}
Assume $v_1, \dots , v_N \in C_0^\infty (B(0,R))$. Then
$(\BB _{N,4R} \vec v)(x) =(\BB _N\vec v)(x)$ in a neighbourhood of
$\overline{B(0,R)}$  and $\BB _{N,2(N-1)R}\vec v = \BB _N \vec v$.
\end{lemma}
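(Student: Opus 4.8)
The plan is to trace the supports of the potentials $v_j$ through the affine changes of variable that appear in the integrand of $\BB_{N,R}\vec v$, and to combine this with the support property \eqref{9} of $E_{N,R}$. First I would note that if $x$ lies in (a small neighbourhood of) $\overline{B(0,R)}$ and $v_j$ is supported in $B(0,R)$, then each factor $v_j(x-\frac{y_N}{2}-Y_{j-1})$ forces $|\frac{y_N}{2}+Y_{j-1}| \le 2R$ (with a little slack for the neighbourhood), and in particular the factor with $j=1$ and the factor with $j=N$ together give control of both $\frac{y_N}{2}+Y_0$ and $\frac{y_N}{2}+Y_{N-1} = \frac{y_N}{2}+Y_0 - \sum_{j=1}^{N-1} y_j$. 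Subtracting, one controls $\sum_{j=1}^{N-1} y_j$; looking at intermediate indices $k$ one controls each partial sum $\sum_{j=1}^{k} y_j$ and hence each $y_j$ for $1 \le j \le N-1$, and then $y_N$ as well. The upshot is a bound $|y_j| \le c R$ for all $j$ with an explicit constant $c$ (independent of $N$, or at worst linear in $N$, depending on how crudely one argues). In particular $|y_N| \le c R$, so on the relevant region the cut-off $\chi_{N,4R}(t)$ (which equals $1$ for $t \le 4R$, using $|x_1|+\cdots+|x_{N-1}| = |x_N| = t$ in $\opn{supp} Q_{N-1}$) is identically $1$; by \eqref{10} the kernels $E_{N,4R}$ and $E_N$ then agree on that region. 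Therefore the two integrals defining $\BB_{N,4R}\vec v$ and $\BB_N\vec v$ have identical integrands for $x$ near $\overline{B(0,R)}$, proving the first assertion.

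For the second, sharper assertion I would run the same support analysis but keep track of the constant carefully: I expect that when $v_j$ is supported in $B(0,R)$ the integrand of $\BB_N\vec v$ vanishes unless $|y_1|+\cdots+|y_{N-1}| \le 2(N-1)R$ (each $y_j$ being a difference of two points, each of norm $\le 2R$, coming from consecutive factors $v_{j}$ and $v_{j+1}$ evaluated at points differing by $y_j$), and then \eqref{en:6}, which gives $|y_1|+\cdots+|y_{N-1}| = |y_N|$ in $\opn{supp}E_N$, yields $|y_N| \le 2(N-1)R$, i.e. $t = |y_N| \le 2(N-1)R$. Since $\chi_{N,2(N-1)R}(t) = 1$ for $t \le 2(N-1)R$, the cut-off is again harmless on the support of the integrand, and \eqref{10} gives $E_{N,2(N-1)R} = E_N$ there; this time the conclusion is a genuine identity $\BB_{N,2(N-1)R}\vec v = \BB_N\vec v$ of functions on all of $\RR^n$, with no neighbourhood restriction, because the support argument did not use any constraint on $x$.

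The main obstacle is purely bookkeeping: getting the support propagation constants right. One must be careful that $y_j = x_{j-1} - x_j$ (in the notation of Proposition~\ref{prop:en1}, with $x_0 = y$), so $y_j$ is a difference of the arguments of two \emph{consecutive} potential factors, each argument lying in $B(0,R) + \frac{y_N}{2} + (\text{shift})$; the telescoping has to be organized so that one never needs more than ``each $v_j$-argument has norm $\le 2R$'' plus one triangle inequality per index. I would also double-check the factor of $4$ versus $2(N-1)$: the value $4R$ is the crude bound that suffices for small $N$ and for the local statement, while $2(N-1)R$ is the tight bound from \eqref{en:6} that is needed to make the constants in Theorem~\ref{mainthm:2} come out with the stated $N$-dependence. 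No deep input is required beyond \eqref{9}, \eqref{10}, \eqref{en:6}, and the definition of $\chi_N$ in Lemma~\ref{chiN:lemma}.
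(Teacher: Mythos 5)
Your proposal is correct and follows essentially the same route as the paper: the paper also bounds $|y_N|$ on $\opn{supp}V_x$ for $x$ near $\overline{B(0,R)}$ by combining the first and last potential factors (using $Y_0+Y_{N-1}=0$, so $|2x-y_N|\le 2R$ directly), and proves the second assertion from $|y_j|<2R$ for $j\le N-1$ together with \eqref{en:6} and \eqref{10}, exactly as you outline. The only caution is that for the first assertion the constant must come out as exactly $4$ (not merely ``linear in $N$''), which your addition of the $j=1$ and $j=N$ controls does deliver.
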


\begin{proof}
Choose $\ep >0$ such that the $v_j$ are supported in $B(0,R-\ep)$ and
define
\begin{equation}\label{vx:def}
V_x(\vec y) = v_1(x-y_N/2-Y_0)\cdots  v_N(x-y_N/2-Y_{N-1}).
\end{equation}
 Since $Y_0+Y_{N-1}=0$, it
follows that
$$
|2x-y_N| = |(x-y_N/2-Y_0)+(x-y_N/2-Y_{N-1})| \leq 2R-2\ep
$$
when $\vec y \in \opn{supp} (V_x)$. 
When $|x|<R+\ep/2$ we see that $|y_N|<4R$ when
$\vec y$ is in the support of $V_x$ and, since $E_{N,4R}=E_{N}$ 
when $|y_N|<4R$, it follows
that $(\BB _{N, 4R}\vec v)(x)= (\BB _N\vec v)(x)$ when $|x|<R+\ep /2$. 
This
proves the first assertion. 
When proving the second assertion we
notice that 
$$
|y_j| = |(x-y_N/2-Y_{j-1}) -(x-y_N/2-Y_j)| < 2R
$$
when $1 \leq j \leq N-1$ and $\vec y \in  \opn{supp} (V_x)$, hence
$
\sum _{1}^{N-1}|y_j|< 2(N-1)R$. 
This shows that the support of
$V_x$ does not intersect the support of $E_{N,2(N-1)R} -E_N$, hence
$\BB _{N,2(N-1)R}\vec v = \BB _N \vec v$. 
\end{proof}

Let $ \vec s = (s_1, \dots , s_N)$ be a sequence of nonnegative 
real numbers and let $\sigma \in \RR$, $ N\geq 2$, $ R > 0$. Define 
\begin{equation}\label{anr:def}
\begin{gathered}
A(N,R,\vec s, {\sigma}) = \\ \sup _{{\xi}_N}\idotsint
(1+4|{\xi}_N|^{2})^{\sigma}  
 |(\Cal F E_{N,R})({\xi}_1, \dots ,{\xi}_N)|^2M_{\vec s}(\xi_1 , \dots
, {\xi} _N)^2 \dd{\xi}_1 \dots \dd{\xi}_{N-1},
\end{gathered}
\end{equation}
where 
$$
M_{\vec s}(\xi _1, \dots , {\xi}_N) = \hake{\xi _1+ \xi _N}^{-s_1}\hake{\xi _2
  - \xi _1}^{-s_2} \cdots \hake{\xi _N - \xi _{N-1}}^{-s_N}.
$$
Then $0 \leq A(N,R,\vec s, {\sigma}) \leq \infty$.

\begin{lemma}\label{lemmamainest}
We have that
\begin{equation}\label{bn:mainest}
\begin{gathered}
\normrum{\BB_{N,R}\vec v}{(\sigma)}^2 \leq (2{\pi})^{n(1-N) }
A_{N,R}(s_1, \dots , s_N,{\sigma}) \prod _1 ^N \normrum{v_j}{(s_j)}^2
\end{gathered}
\end{equation}
for every  $v _j \in C_0^\infty (\RRn)$, $1\le j\le N$.
\end{lemma}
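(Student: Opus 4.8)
The plan is to express $\BB_{N,R}\vec v$ through its Fourier transform and recognize the integral defining it as a convolution-type pairing, then apply the Plancherel theorem together with the pointwise bound on $\Cal F E_{N,R}$ that is implicit in the definition \eqref{anr:def} of $A(N,R,\vec s,\sigma)$. Concretely, I would start from \eqref{def:betaN}, view $(\BB_{N,R}\vec v)(x)$ as $\int E_{N,R}(\vec y)\,V_x(\vec y)\,\dd\vec y$ with $V_x$ as in \eqref{vx:def}, and compute the Fourier transform in $x$. Each factor $v_j(x-y_N/2-Y_{j-1})$ contributes, after Fourier transforming in $x$ and using that the arguments are affine in $\vec y$, a factor of the form $\hat v_j$ evaluated at a linear combination of the dual variables; the change of variables $y_1-\text{related shifts}$ from Proposition~\ref{prop:en1} is exactly the one that makes the consecutive differences $\xi_2-\xi_1,\dots,\xi_N-\xi_{N-1}$ and the sum $\xi_1+\xi_N$ appear, matching the weight $M_{\vec s}$. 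The upshot should be an identity of the schematic form
$$
\widehat{\BB_{N,R}\vec v}(\eta)=(2\pi)^{-c}\int (\Cal F E_{N,R})(\xi_1,\dots,\xi_{N-1},\eta/2)\,\prod_{j=1}^N\hat v_j(\ell_j(\xi,\eta))\,\dd\xi_1\cdots\dd\xi_{N-1},
$$
where $\eta$ is dual to $x$, $\xi_N$ is forced to equal $\eta/2$ (this is the source of the $(1+4|\xi_N|^2)^\sigma$ weight, since $\hake{\eta}^{2\sigma}=(1+|\eta|^2)^\sigma=(1+4|\xi_N|^2)^\sigma$), and the $\ell_j$ are the affine combinations dictated by the $Y_k$'s; the power of $2\pi$ will be $n(N-1)$ coming from the $N-1$ free integrations.

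Next I would estimate $|\widehat{\BB_{N,R}\vec v}(\eta)|$ by inserting $1=M_{\vec s}(\xi_1,\dots,\xi_{N-1},\eta/2)^{-1}\prod_j\hake{\ell_j}^{s_j}\cdot$ (appropriately) and applying Cauchy--Schwarz in the $N-1$ variables $\xi_1,\dots,\xi_{N-1}$: one factor absorbs $(\Cal F E_{N,R})\cdot M_{\vec s}$ and, after multiplying through by $\hake{\eta}^{\sigma}=(1+4|\xi_N|^2)^{\sigma/2}$, its square integrates to at most $A(N,R,\vec s,\sigma)$ uniformly in $\eta$; the other factor is $\prod_j\hake{\ell_j}^{2s_j}|\hat v_j(\ell_j)|^2$, whose integral over $\xi_1,\dots,\xi_{N-1}$ factors (after unwinding the linear map $(\xi_1,\dots,\xi_{N-1})\mapsto(\ell_1,\dots,\ell_N)$ at fixed $\eta$, which has determinant $1$ up to harmless constants because it is the same invertible change of variables as in Proposition~\ref{prop:en1}) into $\prod_j\normrum{v_j}{(s_j)}^2$, possibly with one of the $\ell_j$ being $\eta$-dependent but integrated out against the remaining free variables. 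Then $\normrum{\BB_{N,R}\vec v}{(\sigma)}^2=(2\pi)^{-n}\int\hake{\eta}^{2\sigma}|\widehat{\BB_{N,R}\vec v}(\eta)|^2\,\dd\eta$ and, since the $A$-bound is a supremum that is already uniform in $\xi_N=\eta/2$, pulling it out of the $\eta$-integral leaves exactly $\int\prod_j\hake{\ell_j}^{2s_j}|\hat v_j(\ell_j)|^2$ over all of $(\RR^n)^N$, which equals $\prod_j\normrum{v_j}{(s_j)}^2$ up to the Plancherel constant. Collecting the powers of $2\pi$ gives the stated $(2\pi)^{n(1-N)}$.

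The main obstacle I anticipate is \emph{bookkeeping the linear change of variables and the constants}: one must check carefully that the map sending the $N-1$ integration variables $\xi_1,\dots,\xi_{N-1}$ (at fixed $\xi_N=\eta/2$) to the $N$ ``Sobolev arguments'' $\xi_1+\xi_N,\ \xi_2-\xi_1,\ \dots,\ \xi_N-\xi_{N-1}$ is a bijection of $(\RR^n)^{N-1}$ onto the affine slice $\{\zeta_1+\cdots+\zeta_N=2\xi_N\}$ with Jacobian $1$, so that the Cauchy--Schwarz split genuinely decouples the $N$ Sobolev norms without leaving a residual factor; this is precisely the algebra already carried out in Proposition~\ref{prop:en1}, so it should go through, but it is the step where a spurious constant or a missing $\xi_N$-dependence could creep in. A secondary technical point is justifying the Fourier-transform manipulations: $E_{N,R}$ is compactly supported with $\Cal F E_{N,R}$ smooth (as noted before \eqref{newton:1001}), and the $v_j$ are in $C_0^\infty$, so all the integrals converge absolutely and Fubini applies, making the formal computation rigorous.
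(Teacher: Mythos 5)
Your plan is correct and follows essentially the same route as the paper: Fourier transforming $V_x$ via the linear map $(z_1,\dots,z_N)\mapsto(z_1-z_2,\dots,z_{N-1}-z_N,z_1+z_N)$ to produce the arguments $\xi_1+\xi_N,\ \xi_2-\xi_1,\dots,\xi_N-\xi_{N-1}$, identifying $\xi_N$ with $\eta/2$ (whence the $(1+4|\xi_N|^2)^\sigma$ weight), applying Cauchy--Schwarz in $\xi_1,\dots,\xi_{N-1}$ to split off $A(N,R,\vec s,\sigma)$, and using Plancherel plus the (determinant $2^n$, not $1$, but indeed harmless) change of variables to factor the remaining integral into $\prod_j\normrum{v_j}{(s_j)}^2$. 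The bookkeeping concern you flag is exactly where the paper's explicit computation of $\det L=2^n$ and the substitution $\xi_N\mapsto\xi_N/2$ conspire to yield the constant $(2\pi)^{n(1-N)}$.
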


\begin{proof}
Let $V_x$ be defined as in \eqref{vx:def}. 
In order to compute the
Fourier transform of $V_x$ we introduce the linear map $L$ in
$(\RRn)^N$  through  $L \vec z = \vec y$, where
$$
y_j = z_j-z_{j+1}, \ 1 \leq j \leq N-1,\  y_N = z_1+z_N.
$$
It is easily seen that $\det (L)=2^n$ and that $y_N/2+ Y_{j-1} =z_j$
when $1\leq j \le N$.
Therefore we may write
$$ V_x(\vec y) = (v_1\otimes \cdots \otimes v_N) 
(- L^{-1} (y_1, \dots y_{N-1}, y_N-2x)). 
$$
Hence
$$ {\Cal F}V_x(-\xi_1, \dots, -\xi_N)= 2^n \ee^{2\ii\scalar{x}{\xi_N}}
({\hat v}_1 \otimes \cdots \otimes{\hat v}_N)( \,  L'\vec \xi).
$$
Here $L'$ denotes the transpose of $L$. 
It is easy to see that $L' \vec \xi =\vec \eta$, where
$$
{\eta}_1 ={\xi}_1+{\xi}_N, \ {\eta}_j = {\xi}_j-{\xi}_{j-1}, \quad
2\leq j \leq N.
$$
It follows that
$$
\begin{gathered}
(\Cal F V_x)(- \xi _1, \dots , - \xi _N) = 
2^n \ee^{2\ii\scalar x{\xi _N}} \widehat v _1( \xi _1+ \xi _N) \widehat v
_2( \xi _2- \xi _1) \cdots \widehat v _N( \xi _N - \xi _{N-1}).
\end{gathered}
$$
Write $w_j = \Cal F \hake{D}^{s_j} v_j$   and 
$$
W(\vec \xi ) = w_1(\xi _1+\xi_N) w_2( \xi _2- \xi _1) \cdots  w_N( \xi
_N- \xi _{N-1}).
$$
It follows from \eqref{def:betaN} and the  computations above that
$$
\begin{gathered}
(\BB_{N,R}\vec v)(x) =
(2{\pi})^{-nN}
\int (\Cal F E_{N,R})(\vec \xi) (\Cal F V_x)(- \vec \xi)  \dd\vec \xi
\\=
(2{\pi} )^{-nN}2^n \int  \ee^{2\ii \scalar x{\xi _N}}  {\beta}({\xi}_N) 
    \dd{\xi}_N\\
=(2{\pi})^{-nN}\int  \ee^{\ii\scalar x {{\xi}_N}} {\beta}({\xi}_N/2) \dd{\xi}_N,
\end{gathered}
$$ 
where
$$
\begin{gathered}
{\beta}({\xi}_N)= 
\idotsint (\Cal F E_{N,R})(\vec {\xi}) \widehat{v}_1( \xi _1 +\xi _N) \widehat v _2(\xi _2-\xi _1)
\cdots \widehat v _N(\xi _N - \xi _{N-1}) \dd{\xi} _1 \cdots  
\dd{\xi}_{N-1}\\
=\idotsint (\Cal F E_{N,R})(\vec {\xi}) M_{\vec s}(\vec {\xi}) 
W(\vec{\xi}) \dd{\xi}_1\cdots \dd{\xi}_{N-1}.
\end{gathered}
$$
This shows that
\begin{equation}\label{est1}
\begin{gathered}
\normrum{\BB_{N,R}\vec v}{(\sigma) }^2 = (2{\pi})^{-2n(N-1/2)}\int
\hake{\xi _N}^{2{\sigma}} |{\beta}({\xi}_N/2)|^2 \dd{\xi}_N \\
= 2^n (2{\pi})^{-2n(N-1/2)} \int (1+|4{\xi}_N|^2)^{\sigma}|{\beta}({\xi}_N)|^2 \,
\dd{\xi}_N\\
\leq 2^n (2{\pi} )^{-2n(N-1/2)}\int  \Big \{ (1+4|{\xi}_n|^2)^{{\sigma}}\Big ( \idotsint |(\Cal F
E_{N,R})(\vec {\xi}) |^2 M_{\vec s}({\xi})^2  \dd{\xi}_1 \cdots
\dd{\xi}_{N-1} \Big )
\\
\Big (\idotsint  |W(\vec {\xi})|^2 \dd{\xi} _1\cdots d{\xi}_{N-1}\Big )\Big \}
\dd{\xi}_N.
\\
\leq 2^n (2{\pi})^{-2n(N-1/2)}A(N,R,\vec s,{\sigma}) \int |W(\vec
{\xi})|^2  \dd\vec {\xi}. 
\end{gathered}
\end{equation}
The proof is then completed by the observation that
$$
W(\vec {\xi})= (w_1 \otimes w_2 \otimes \cdots \otimes w_N)(L'\vec {\xi}).
$$
 It  follows that
$$
\begin{gathered} 
\int |W(\vec {\xi})|^2 \dd\vec {\xi} = 2^{-n}\int |(w_1\otimes \cdots
\otimes w_N)(\vec {\xi})|^2 \dd\vec {\xi}\\
= 2^{-n} \prod _1 ^N \norm{w_j}^2 = 2^{-n} (2{\pi})^{nN} \prod _1 ^N
\norm{\hake{D}^{s_j} v_j}^2 
=2^{-n}(2{\pi})^{nN}\prod _1 ^N
\normrum{v_j}{(s_j)}^2.
\end{gathered}
  $$ 
The lemma follows if this is inserted into \eqref{est1}.
\end{proof}

We shall arrive at estimates for  $B_{N,R}\vec v$ by combining  the
inequality \eqref{bn:mainest}  with estimates for the expression
$A(N,R, \vec s, \sigma)$ in \eqref{anr:def}.  
The  following lemma
will be needed.
\begin{lemma}\label{5.4}
Assume $0<\ep <1$. Then there is a constant $C=C_{n,\ep}$  such that
\begin{equation}\label{hgamma:est}
\int h_{\gamma} ^2(|{\xi}|,{\rho})\hake {{\xi}-{\eta}}^{-2s} \dd{\xi}
\leq C {\gamma}^{-1}\hake{\rho}^{2m-2s}, 
\end{equation}
when ${\eta} \in
  \RRn$, ${\rho} \geq 0$, ${\gamma} > 0$, $m \leq s \leq m+1-\ep$. 
\end{lemma}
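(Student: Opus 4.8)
The plan is to reduce the $n$-dimensional integral to a one-dimensional one by integrating out the angular variables, and then to bound the remaining radial integral by splitting according to the size of $|\xi|$ relative to $\rho$. First I would write $\xi = r\omega$ with $r=|\xi|\ge 0$ and $\omega\in\mathbb S^{n-1}$; since $h_\gamma(|\xi|,\rho)$ depends only on $r$, the angular integration affects only the factor $\hake{\xi-\eta}^{-2s}$. The key auxiliary estimate is that
\begin{equation*}
\int_{\mathbb S^{n-1}} \hake{r\omega-\eta}^{-2s}\,\dd\omega \le C_{n,\ep}\, r^{-(n-1)}\bigl(\hake{r-|\eta|}^{-2s+n-1} + \text{lower order}\bigr),
\end{equation*}
more precisely that $\int_{|\xi|=r}\hake{\xi-\eta}^{-2s}\,\dd S(\xi)\le C\,\hake{r-|\eta|}^{-2s}\,r^{n-1}$ fails for small $r$, so one really wants the cleaner bound $\int_{\mathbb S^{n-1}}\hake{r\omega-\eta}^{-2s}\,\dd\omega \le C\min(1,(r|\eta|)^{-(n-1)/2})\hake{r-|\eta|}^{-2s+\frac{n-1}{2}}$ type estimate; but since we only need the final answer with the innocuous exponent $2m-2s = n-3-2s$, a crude bound suffices. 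In fact, the simplest route is: after passing to polar coordinates, $\dd\xi = r^{n-1}\,\dd r\,\dd\omega$, bound $\hake{\xi-\eta}^{-2s}\le 1$ trivially is too lossy, so instead use that for fixed $r$ and $\rho$ the function $\eta\mapsto \int_{\mathbb S^{n-1}}\hake{r\omega-\eta}^{-2s}\dd\omega$ is maximized, up to a dimensional constant, at $|\eta|=r$, where it is comparable to $r^{-(n-1)}$ when $r\ge 1$ (the sphere of radius $r$ has surface measure $\sim r^{n-1}$ and the integrand is $O(1)$ on a cap of measure $\sim r^{n-1}$ shrinking like... ) — this is the step that needs care.

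Let me restructure. The clean statement I would actually prove and use is:
\begin{equation*}
\sup_{\eta\in\RRn}\int_{\mathbb S^{n-1}}\hake{r\omega-\eta}^{-2s}\,\dd\omega \le C_{n,\ep}\,\hake{r}^{-(n-1)},\qquad r\ge 0,
\end{equation*}
valid precisely because $2s \ge 2m = n-3 > n-1-2$, wait — we need $2s$ to exceed something. Here $2s\ge n-3$, and the angular integral of $\hake{r\omega-\eta}^{-2s}$ over the sphere behaves like $\hake{r}^{-(n-1)}$ times a bounded factor provided $2s > n-2$; since $s\ge m=(n-3)/2$ only gives $2s\ge n-3$, this borderline fails by one, which is exactly why the exponent $2m$ (rather than $2m+1$) appears on the right of \eqref{hgamma:est} and why one must keep track of a factor $\hake{\rho}$-worth of room. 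So the honest plan is: bound the angular integral by $C\hake{r}^{-(n-2)}\hake{r-|\eta|}^{-1}$ when... — this is getting delicate, and this angular estimate is the main obstacle.

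Granting the radial reduction, the endgame is a one-variable computation: one is left with
\begin{equation*}
\int_0^\infty h_\gamma^2(r,\rho)\,r^{n-1}\,g(r,\eta)\,\dd r,
\end{equation*}
where $g$ is the angular average, and $h_\gamma^2(r,\rho) = (\gamma+|r-\rho|)^{-2}(\gamma+r+\rho)^{-2}$. The factor $(\gamma+r+\rho)^{-2}$ is harmless for $r$ away from $0$ and, combined with $r^{n-1}$ and the decay of $g$, keeps everything integrable at infinity; the factor $(\gamma+|r-\rho|)^{-2}$ concentrates the mass in the shell $|r-\rho|\lesssim\gamma$, contributing the $\gamma^{-1}$ (one power of $\gamma^{-1}$ survives after integrating $(\gamma+|r-\rho|)^{-2}$ over $r$, which gives $\sim\gamma^{-1}$). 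Evaluating the slowly varying factors at $r=\rho$ produces $\hake{\rho}^{-(\text{something})}$, and matching the bookkeeping so that the net power is exactly $2m-2s=n-3-2s$ is where the condition $s\le m+1-\ep$ is used: it guarantees a spare $\ep$ of integrability (equivalently $2s\le n-1-2\ep$) needed to control the tail $r\to\infty$ and to absorb the loss in the angular estimate. I expect the angular/spherical-average estimate, and the precise tracking of exponents so the endpoint power comes out as $2m-2s$ rather than $2m-2s+1$, to be the only genuinely delicate point; the rest is routine splitting of the radial integral into the regions $r\le\rho/2$, $\rho/2\le r\le 2\rho$, and $r\ge 2\rho$.
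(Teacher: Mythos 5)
Your overall architecture — pass to polar coordinates, average out the angle, then extract $\gamma^{-1}$ from $\int(\gamma+|r-\rho|)^{-2}\dd r$ — is exactly the paper's, but you leave the angular estimate unresolved and explicitly flag it as ``the main obstacle,'' so what you have is not a proof. Moreover, all the candidate angular bounds you consider (of the shape $\hake{r}^{-(n-1)}$, or $\hake{r-|\eta|}^{-2s+(n-1)/2}$ with oscillatory-integral flavour) are the wrong shape, and your diagnosis that the exponent $n-1$ ``fails by one'' leads you away from the correct statement rather than towards it. The estimate that actually closes the argument is the uniform-in-$\eta$ bound
$$
f_s(r,\eta):=\int\limits_{\bS^{n-1}}\hake{r\theta-\eta}^{-2s}\,\dd\theta\ \leq\ C_{n,\ep}\,\hake{r}^{-2s},
$$
i.e.\ the spherical average retains the \emph{full} decay $-2s$, with no reference to $n-1$ at all. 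It follows from the elementary pointwise inequality $\hake{r\theta-\eta}^{2}\geq 1+r^{2}(1-|u|)$, $u=\scalar{\theta}{\eta}/|\eta|$, together with $\int_{\bS^{n-1}}f(u)\,\dd\theta=c_{n-2}\int_{-1}^{1}f(t)(1-t^{2})^{m}\dd t$ and then
$\int_{0}^{1}(1+r^{2}t)^{-s}t^{m}\dd t\leq\hake{r}^{-2s}\int_{0}^{1}t^{m-s}\dd t\leq\hake{r}^{-2s}/\ep$, using $(1+r^{2}t)\geq t(1+r^{2})$ for $0<t\leq1$. Note that this is precisely where the hypothesis $s\leq m+1-\ep$ enters (finiteness of $\int_{0}^{1}t^{m-s}\dd t$, i.e.\ integrability near the cap where $\theta$ points at $\eta$), not in the tail $r\to\infty$ as you conjecture; that tail is already handled by the integrable factor $(\gamma+|r-\rho|)^{-2}$.

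Granting the angular bound, your endgame is essentially right and even simpler than you anticipate: one is left with $\int_{0}^{\infty}h_{\gamma}^{2}(r,\rho)\,r^{2m+2}\hake{r}^{-2s}\dd r$, and for $\rho\geq1$ one absorbs $(\gamma+r+\rho)^{-2}(r^{2}+1)^{m+1-s}\leq\rho^{-2(s-m)}$ and bounds $r^{2m+2}(r^{2}+1)^{-(m+1)}\leq1$, leaving $\int_{\RR}(\gamma+|r|)^{-2}\dd r=2\gamma^{-1}$; for $\rho<1$ one uses $(\gamma+r+\rho)^{-2}\leq r^{-2}$ and $r^{2m}(r^{2}+1)^{-s}\leq1$. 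No three-way splitting $r\leq\rho/2$, $\rho/2\leq r\leq2\rho$, $r\geq2\rho$ is needed. The genuine gap is therefore concentrated in the one step you yourself identify as delicate and do not supply.
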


\begin{proof}
 Assume $r>0$ and ${\eta} \in \RRn
\setminus 0$. Set
$$
f_s(r,{\eta}) = \int\limits _{\bS^{n-1}} \hake {r{\theta}-{\eta}}^{-2s}\,
\dd{\theta}.
$$
If $ u= \scalar {\theta} {\eta}/|{\eta}|$ then  a simple computation
shows that
$$
\hake{ r{\theta}-{\eta}}^2  \geq 1+ r^2(1-|u|).
$$
If $f$ is  a continuous function, and $c_{n-2}$ is the area of the
$n-2$-dimensional unit sphere,  then 
$$
\int\limits _{\bS^{n-1}} f(\scalar {\theta} {\eta}/|{\eta}|) \dd{\eta} = 
c_{n-2} \int _{-1}^1 f(t) (1-t^2)^m\dd t.
$$
This shows that
$$
\begin{gathered}
f_s(r,{\eta})\leq c_{n-2} \int\limits _{-1}^1(1+r^2(1-|t|))^{-s}(1-t^2)^m \dd t 
\\
\leq 2^{m+1}c_{n-2} \int\limits _{0}^1(1+r^2(1-t))^{-s}(1-t)^m \dd t 
\leq 2^{m+1}c_{n-2} \int\limits _0 ^1 (1+r^2t)^{-s}t^m \dd t\\
\leq  2^{m+1}c_{n-2} \hake r ^{-2s}\int\limits _0^1 t^{m-s} \dd t .
\end{gathered}
$$
This gives the estimate
\begin{equation}\label{extra:4001}
f_s(r,{\eta}) \leq C_1 \hake  r ^{-2s},
\end{equation}
where $C_1=  2^{m+1}c_{n-2} /\ep$, for $\eta\in \RR^n\setminus 0$. 
This inequality clearly holds for $\eta=0$ as well. 

Using \eqref{extra:4001} and introducing polar coordinates in the
integration
one gets 
\begin{equation}\label{40011}
\int h_{\gamma} ^2(|{\xi}|,{\rho})\hake {\xi -{\eta}}^{-2s} 
\dd {\xi}
\leq C_1 \int \limits_0^\infty h_{\gamma} ^2( r, {\rho})r^{2m+2}\hake r^{-2s} 
\dd r.
\end{equation}

Assume first that $\rho\ge 1$.
Then
\begin{equation}\label{40012}
\begin{gathered}
\int\limits_0^\infty 
h_{\gamma} ^2( r, {\rho})r^{2m+2}\hake r^{-2s} 
\dd r \\
= \int\limits_0^\infty 
\frac{1}{(\gamma+|r-\rho|)^2}
\,
\frac{1}{(\gamma+r+\rho)^2}
\,
\frac{r^{2m+2}}{(r^2+1)^s}\dd r\\
\le 
\frac{1}{\rho^{2(s-m)}}
\int\limits_0^\infty \frac {1}{(\gamma+|r-\rho|)^2}
\,\frac{r^{2m+2}}{(r^2+1)^{m+1}}\dd r\\
\le \frac{2^{2(s-m)}}{(\rho+1)^{2(s-m)}} 
\int\limits_{\RR} \frac{1}{(\gamma+|r|)^2} \dd r
\le 2^3 \gamma^{-1} \hake{\rho}^{-2(s-m)}.
\end{gathered}
\end{equation}
Assume next that $\rho<1$. 
Then
\begin{equation}\label{40013}
\begin{gathered}
\int\limits_0^\infty 
h_{\gamma} ^2( r, {\rho})r^{2m+2}\hake r^{-2s} 
\dd r \\
\le  \int\limits_0^\infty 
\frac{1}{(\gamma+|r-\rho|)^2}
\,
\frac{r^{2m}}{(r^2+1)^s}\dd r
\le 
\int\limits_{\RR} \frac{1}{(\gamma+|r|)^2}\dd r
\\
\le 2^{2(s-m)+1}\gamma^{-1} {(\rho^2+1)^{-(s-m)}}
\le 2^3 \gamma^{-1} \hake{\rho}^{-2(s-m)}.
\end{gathered}
\end{equation}
Combining \eqref{40011}, \eqref{40012} and \eqref{40013} we see that 
the lemma holds with $C=2^3 C_1$.
\end{proof}

Now we are going to estimate $A(N,R,\vec s, \gamma)$. 
Recall  that Theorem~\ref{newtonthm} gives that
\begin{equation}\label{extra:4002}
\begin{gathered}
|(\Cal F E_{N,R})({\xi}_1, \dots ,{\xi}_N)| \leq\
C^N (N/(\gamma R))^{2N+1}
e^{2R{\gamma}} \prod
_{1\leq j \leq N-1} h_{\gamma}(|{\xi}_j|,|{\xi}_N|),
\end{gathered}
\end{equation}
where ${\gamma} > 0$, $ N\geq 2$,  $R> 0$ and
the constant $C$ is
independent of these parameters.
We notice that
\begin{equation}\label{extra:4003}
\begin{gathered}
M_{\vec s} ({\xi}_1, \dots , {\xi}_N) \leq
2^{s_2} \hake {\xi _1+ \xi _N}^{-s_1} M_{s_2, \dots ,s_N}(\xi _2,
\dots ,\xi _N) \\+ 2^{s_1} \hake {\xi _2-\xi _1}^{-s_2} M_{s_1, s_3,
  \dots ,s_N} (\xi _2, \dots , {\xi} _N).
\end{gathered}
\end{equation}
In fact, since 
$$
|{\xi}_2+{\xi}_N| \leq |{\xi}_2-{\xi}_1|+ |{\xi}_1+{\xi}_N|,
$$
either $|{\xi}_2-{\xi}_1|  \geq |{\xi}_2+{\xi}_N|/2$ or
$|{\xi}_1+{\xi}_N|\geq |{\xi}_2+{\xi}_N|/2$. 
In the first case 
$$
M_{\vec s}({\xi}_1, \dots ,{\xi}_N)\leq 2^{s_2} \hake{{\xi}_1+{\xi}_N}
^{-s_1} M_{ s_2, \dots ,s_N}({\xi}_2, \dots , {\xi}_N)$$
and in the second case
$$
M_{\vec s}({\xi}_1, \dots ,{\xi}_N)\leq 2^{s_1} \hake{{\xi}_1-{\xi}_2}
^{-s_2} M_{s_1, s_3, \dots ,s_N}({\xi}_2, \dots , {\xi}_N).$$

When $N \geq 2$ we define
$$
\begin{gathered}
T_{N,{\gamma}}({\xi}, \vec s) = \idotsint \Big (\prod _{1\leq j \leq
  N-1} h_{\gamma}(|{\xi}_j|, |{\xi}|) \Big )^2 M_{\vec s}^2({\xi}_1,
\dots ,{\xi}_{N-1}, {\xi}) \dd{\xi}_1 \dd{\xi}_2\dots \dd{\xi}_{N-1}. 
\end{gathered}
$$

Let $0 <\ep <1$ and assume that $m \leq s_j \leq m+1-\ep$ when $0
\leq j \leq N$.
It follows from  \eqref{extra:4002} and \eqref{anr:def} that
\begin{equation}\label{anrest:2}
A(N,R,\vec s , {\sigma}) \leq  C^N (N/(\gamma R))^{4N+2} e^{4R{\gamma}}
\sup _{\xi}( \hake{2\xi}^{2\sigma} T_{N,{\gamma}}({\xi};\vec s)).
\end{equation}  
Here, and in what follows, $C$ denotes constants that are independent
of $N$, $R$, $\vec s$, ${\sigma}$, ${\gamma}$ (but may depend on $\ep$ and dimension $n$).

Assume $N \geq 3$. 
From \eqref{extra:4003} follows that
$$
\begin{gathered}
T_{N,{\gamma}}({\xi};\vec s) \leq  2^n \Big ( \int h_{\gamma}
^2(|{\xi}_1|,|{\xi}|) \hake{{\xi}_1+{\xi}}^{-2s_1} \dd{\xi}_1) \\
\idotsint \Big (\prod _{2\leq j \leq N-1}
h_{\gamma}^2(|{\xi}_j|,|{\xi}|)\Big )M_{s_2, \dots ,s_N}^2({\xi}_2,
\dots, {\xi}_{N-1},  {\xi})\dd{\xi}_2\cdots \dd{\xi}_{N-1}
\\
+ 
 2^n \Big ( \int h_{\gamma}
^2(|{\xi}_1|,|{\xi}|) \hake{{\xi}_1-{\xi}_2}^{-2s_2} \dd{\xi}_1) \\
\idotsint \Big (\prod _{2\leq j \leq N-1}
h_{\gamma}^2(|{\xi}_j|,|{\xi}|)\Big )M_{s_1,s_3, \dots ,s_N}^2({\xi}_2,
\dots, {\xi}_{N-1},  {\xi})\dd{\xi}_2\cdots \dd{\xi}_{N-1}.
\end{gathered}
$$
From Lemma~\ref{5.4} we get the 
estimate
\begin{equation}\label{pn:12}
\begin{gathered}
T_{N,\gamma}({\xi};\vec s) \leq (C/2) {\gamma} ^{-1} \hake {\xi}
^{2m-2s_1} T_{N-1,{\gamma}} ({\xi};s_2, \dots , s_N)\\
+  (C/2) {\gamma} ^{-1} \hake {\xi}
^{2m-2s_2} T_{N-1,{\gamma}} ({\xi};s_1, s_3,\dots , s_N).
\end{gathered}
\end{equation}
Another application  of  Lemma~\ref{5.4} gives 
$$
\begin{gathered}
T_{2,{\gamma}}({\xi};s_1, s_2) 
= \int h_{\gamma}^2(|{\xi}_1|,|{\xi}|) \hake{{\xi}_1+{\xi}}^{-2s_1}
\hake{{\xi}_1-{\xi}}^{-2s_2} \dd{\xi} _1 
\\
\leq \hake{\xi} ^{-2s_1} \int h_{\gamma} ^2(|{\xi}_1|, |{\xi}|)
\hake{{\xi}_1-{\xi}}^{-2s_2}\dd{\xi}_1
+ \hake  \xi ^{-2s_2} \int h_{\gamma} ^2(|{\xi}_1|, |{\xi}|)
\hake{{\xi}_1+{\xi}}^{-2s_1}\dd{\xi}_1 
\\
\leq C{\gamma}^{-1}\hake {\xi} ^{2m-2s_1-2s_2}  
\end{gathered}
$$
where we may assume that $C$ is the same  constant as in
\eqref{pn:12}.
From this we deduce  that the inequality
\begin{equation}\label{pnest:1}
T_{N,{\gamma}}({\xi}; \vec s) \leq 
C ^{N-1}{\gamma} ^{-(N-1)}\hake {\xi}^{2((N-1)m-s_1- \cdots -s_N)}
\end{equation}
holds when  $N=2$. 
Applying  \eqref{pn:12} together with an induction
argument we obtain that \eqref{pnest:1} holds for every $N \geq 2$. 
Since $m\le s_j < m+1$ we get (with another $C$)
\begin{equation}\label{pnest:11}
T_{N,{\gamma}}({\xi}; \vec s) \leq 
C ^{N-1}{\gamma} ^{-(N-1)}\hake {2\xi}^{2((N-1)m-s_1- \cdots -s_N)}.
\end{equation}

Assume now that $s_j \ge m$, $j=1, \dots ,N$, but not necessarily
$s_j < m+1$, and let $0< \ep< 1$.
Set $a_j = \min(s_j-m, 1-\ep)$, $j=1, \dots, N$.
We notice that 
$$ |\xi_1+\xi_N|+ |\xi_2-\xi_1|+\cdots +|\xi_N -\xi_{N-1}|\ge 2 |\xi_N|,
$$
and therefore
$$ 
\max(|\xi_1+\xi_N|, |\xi_2-\xi_1|, \cdots |\xi_N -\xi_{N-1}|) \ge 
2|\xi_N|/N.
$$
It follows that
$$ 
\hake{\xi_1+\xi_N}^{-1}\hake{\xi_2-\xi_1}^{-1}\cdots \hake{\xi_N -\xi_{N-1}}^{-1}
\le (1+ 4|\xi_N|^2/N^2)^{-1/2} \le N \hake{2\xi_N}^{-1}.
$$ 
Then we may write
$$ 
M_{\vec s}(\vec \xi) \le N^{\min(s_j-(a_j+m))} \hake{2\xi_N}^{- \min(s_j-(a_j+m))}
M_{(m+a_1, \dots, m+a_N)}(\vec \xi).
$$
This implies that
$$ 
T_{N, \gamma}(\xi; \vec s) \le N^{2\min(s_j-(a_j+m))} \hake{2\xi}^{- 2\min(s_j-(a_j+m))}
T_{N, \gamma}(\xi; m+a_1, \dots, m+a_N).
$$
Then \eqref{pnest:11} gives
\begin{equation}\label{pnest:2}
T_{N, \gamma}(\xi, \vec s) \le   C^{N-1} N^{2\min(s_j-(a_j+m))} 
{\gamma}^{-(N-1)}\hake
{2\xi}^{-2\min(s_j-a_j) -2\sum\limits_1^N a_j}.
\end{equation}

Combining \eqref{anrest:2} with \eqref{pnest:2}  we get the following
lemma.
\begin{lemma}\label{5.5}
Assume that $0<\ep<1$,   $s_j \ge m$  and $a_j = \min(s_j-m, 1-\ep)$, 
$j=1, \dots ,N$.
Set
$$ \sigma= \min(s_j-a_j) +\sum\limits_{j=1}^N a_j$$
Then there is a constant $C$ which is independent of the $s_j$, but may depend 
on $\ep$ and $n$, such that 
\begin{equation}\label{anrs:est}
A(N,R,\vec s,{\sigma}) \leq C^N N^{2\min(s_j-a_j-m)} (N/(\gamma R))^{4N+2}
{\gamma}^{-(N-1)}e^{4R{\gamma}}
\end{equation}
for every $\gamma>0$, $R>0$ and $N\ge 2$.
\end{lemma}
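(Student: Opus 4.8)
The plan is to obtain \eqref{anrs:est} by simply assembling the two estimates already in hand in the preceding paragraphs, \eqref{anrest:2} and \eqref{pnest:2}; the argument will be short, and the one point that genuinely needs attention is the exact bookkeeping of the exponent of $\hake{2\xi}$, which is precisely what dictates the stated choice of $\sigma$.

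First I would quote \eqref{anrest:2}, which has already reduced the estimate of $A(N,R,\vec s,\sigma)$ to a uniform-in-$\xi$ bound for $\hake{2\xi}^{2\sigma}T_{N,\gamma}(\xi;\vec s)$, namely
$$ A(N,R,\vec s,\sigma)\le C^N\bigl(N/(\gamma R)\bigr)^{4N+2}e^{4R\gamma}\,\sup_{\xi}\bigl(\hake{2\xi}^{2\sigma}T_{N,\gamma}(\xi;\vec s)\bigr); $$
here $C$ depends only on $\ep$ and $n$, being inherited from Theorem~\ref{newtonthm} via \eqref{extra:4002} and from Lemma~\ref{5.4}. Next I would insert \eqref{pnest:2}, which under exactly the hypotheses of the present lemma bounds $T_{N,\gamma}(\xi;\vec s)$ by $C^{N-1}N^{2\min(s_j-(a_j+m))}\gamma^{-(N-1)}\hake{2\xi}^{-2\min(s_j-a_j)-2\sum_1^N a_j}$. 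Since $\sigma=\min(s_j-a_j)+\sum_{j=1}^N a_j$, the two powers of $\hake{2\xi}$ cancel identically, so that $\hake{2\xi}^{2\sigma}T_{N,\gamma}(\xi;\vec s)\le C^{N-1}N^{2\min(s_j-(a_j+m))}\gamma^{-(N-1)}$ uniformly in $\xi$. Feeding this into the display above and merging $C^N$ with $C^{N-1}$ into a single constant of the form $C^N$ (allowed since $2N-1\le 2N$) yields \eqref{anrs:est}, with a constant independent of $N$, $R$, $\gamma$ and $\vec s$ that depends only on $\ep$ and $n$ — which is the content of the lemma.

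I expect no real obstacle: the substance of the argument lies entirely in the earlier estimates — Lemma~\ref{5.4} (the spherical-average bound \eqref{extra:4001} feeding \eqref{hgamma:est}) and the induction that produced \eqref{pnest:1}, together with the reduction to the range $m\le s_j<m+1$ via $\max(|\xi_1+\xi_N|,\dots,|\xi_N-\xi_{N-1}|)\ge 2|\xi_N|/N$ that led to \eqref{pnest:2}. The only thing I would double-check is that no hidden dependence on $N$ or on the individual $s_j$ slips into the constant when the various powers of $C$ are absorbed, and that the exponents $4N+2$ and $N-1$ of $\gamma$ are carried along unchanged; that done, the lemma is essentially just a repackaging of \eqref{anrest:2} and \eqref{pnest:2}.
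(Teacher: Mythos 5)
Your proposal is correct and is essentially identical to the paper's own argument: the paper proves this lemma with the single sentence ``Combining \eqref{anrest:2} with \eqref{pnest:2} we get the following lemma,'' and your write-up simply makes explicit the cancellation of the $\hake{2\xi}$ powers forced by the definition of $\sigma$ and the absorption of $C^{N}\cdot C^{N-1}$ into a single $C^{N}$.
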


Next we recall \eqref{bn:mainest}  which, together with the previous 
lemma, gives the next proposition.

\begin{proposition}\label{bngamma}
Assume that $0<\ep<1$,   $s_j \ge m$  and $a_j = \min(s_j-m, 1-\ep)$, $j=1, \dots ,N$.
Set
$$ \sigma= \min(s_j-a_j) +\sum\limits_{j=1}^N a_j.$$
Then there is a constant $C$ which is independent of the $s_j$, 
but may depend 
on $\ep$ and $n$, such that 
\begin{equation}\label{bnest:3}
\normrum{\BB_{N,R}\vec v}{(\sigma )}^2 \leq C^N  N^{2\min(s_j-a_j-m)} (N/(\gamma R))^{4N+2} 
{\gamma}^{-(N-1)} e^{4R{\gamma}}\prod\limits_1^N \normrum{v_j}{(s_j)}^2 , 
\end{equation}
for every $N\ge 2$,  $v_1, \dots ,v_N\in C_0^\infty(\RR^n)$, $R>0$ and 
 $\gamma> 0$.
\end{proposition}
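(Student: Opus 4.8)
The plan is to derive \eqref{bnest:3} by substituting the frequency-side estimate of Lemma~\ref{5.5} into the representation inequality of Lemma~\ref{lemmamainest}; essentially no new work is needed, since all of the analysis has been carried out in the preceding lemmas. First I would recall \eqref{bn:mainest}: for arbitrary $v_j\in C_0^\infty(\RR^n)$ it bounds $\normrum{\BB_{N,R}\vec v}{(\sigma)}^2$ by $(2\pi)^{n(1-N)}\,A(N,R,\vec s,\sigma)\prod_1^N\normrum{v_j}{(s_j)}^2$, where $A(N,R,\vec s,\sigma)$ from \eqref{anr:def} is the supremum over $\xi_N$ of the $L^2$-mass, in the remaining $N-1$ frequency variables, of $\hake{2\xi_N}^{\sigma}\,\Cal F E_{N,R}\cdot M_{\vec s}$. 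This inequality is exactly where the $N$-linear formula of Proposition~\ref{prop:en1} and Plancherel's theorem are used.

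Next I would invoke Lemma~\ref{5.5}, whose hypotheses ($0<\ep<1$, $s_j\ge m$, $a_j=\min(s_j-m,1-\ep)$, and $\sigma=\min(s_j-a_j)+\sum_1^N a_j$) coincide with those of the proposition, to replace $A(N,R,\vec s,\sigma)$ by $C^N N^{2\min(s_j-a_j-m)}(N/(\gamma R))^{4N+2}\gamma^{-(N-1)}e^{4R\gamma}$, valid for all $N\ge2$ and $\gamma,R>0$ with $C=C_{n,\ep}$. Multiplying the two bounds gives the asserted inequality up to the single extra factor $(2\pi)^{n(1-N)}$, which I would absorb into the geometric constant: since $(2\pi)^{n(1-N)}\le(2\pi)^n\le((2\pi)^n)^N$ for $N\ge1$, replacing $C$ by $(2\pi)^nC$ (still depending only on $n$ and $\ep$) produces \eqref{bnest:3} in the stated form.

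I do not anticipate any genuine obstacle at this step; the substantive work was completed earlier, namely the identification of $B_N$ with an $N$-linear form built from the fundamental solution $E_N$ (Section~2 and Proposition~\ref{prop:en1}), the homogeneity-plus-decay estimate for $\Cal F E_{N,R}$ in Theorem~\ref{newtonthm}, and the iterated one-dimensional integral bounds of Lemma~\ref{5.4} propagated through \eqref{pn:12} to \eqref{pnest:11} and then, after the reduction of the general indices $s_j$ to $m+a_j$, to \eqref{pnest:2}. The proposition merely packages these into a bound for $\BB_{N,R}\vec v$ itself, and it deliberately keeps $\gamma$ and $R$ free; the point is that the later choice $\gamma=N/R$ turns $(N/(\gamma R))^{4N+2}\gamma^{-(N-1)}e^{4R\gamma}$ into $C^N(R/N)^{N-1}$, which is what produces the factor $(R/N)^{N-1}$ in Theorem~\ref{mainthm:2}.
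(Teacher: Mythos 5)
Your proposal is correct and matches the paper's argument exactly: the proposition is obtained by inserting the bound for $A(N,R,\vec s,\sigma)$ from Lemma~\ref{5.5} into the inequality \eqref{bn:mainest} of Lemma~\ref{lemmamainest}, with the harmless factor $(2\pi)^{n(1-N)}$ (which is in fact $\le 1$ for $N\ge 1$) absorbed into $C^N$. Your closing remark about the later choice of $\gamma$ is also consistent with how the paper deduces Theorem~\ref{mainthm:2}.
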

 
 Theorem~\ref{mainthm:2} follows from the previous proposition  and Lemma~\ref{lemma:5.2}, by replacing $R$ by $4R$ and taking $\gamma= N/(4R)$.
 When replacing $R$ by $2(N-1)R$ and taking $\gamma=1/R$, we obtain the 
 following corollary, where we use Lemma~\ref{lemma:5.2}.
 
 \begin{corollary}\label{lastcor}
Assume that $0<\ep<1$,   $s_j \ge m$  and $a_j = \min(s_j-m, 1-\ep)$, $j=1, \dots ,N$.
Set
$$ \sigma= \min(s_j-a_j) +\sum\limits_{j=1}^N a_j.$$  
Then there is a constant $C$, which depends on $n$,  $\ep$ and the  $s_j$ only,
such that
\begin{equation}\label{bnest:3001}
\normrum{\BB_{N}\vec v}{(\sigma )}^2 \leq C^N R^{N-1} 
\prod\limits_1^N \normrum{v_j}{(s_j)}^2 , 
\end{equation}
for every $N\ge 2$, $R>0$ and $v_1, \dots ,v_N\in C_0^\infty(B(0, R))$.
\end{corollary}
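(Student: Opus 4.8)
The plan is to read off the corollary from Proposition~\ref{bngamma} and the second assertion of Lemma~\ref{lemma:5.2} after a suitable choice of the two free parameters. Fix $\vec v=(v_1,\dots,v_N)$ with $v_j\in C_0^\infty(B(0,R))$. By Lemma~\ref{lemma:5.2} one has $\BB_N\vec v=\BB_{N,2(N-1)R}\vec v$, so it is enough to bound $\normrum{\BB_{N,2(N-1)R}\vec v}{(\sigma)}^2$. For this I would invoke inequality \eqref{bnest:3} of Proposition~\ref{bngamma} with the radius $R$ there replaced by $2(N-1)R$ (legitimate, since that proposition holds for every positive value of the radius and every $\gamma>0$) and with $\gamma=1/R$.

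With these substitutions the factors on the right of \eqref{bnest:3} collapse as follows. The factor $\bigl(N/(\gamma R)\bigr)^{4N+2}$ turns into $\bigl(N/(2(N-1))\bigr)^{4N+2}$, which is $\le 1$ for every $N\ge 2$ because $N\le 2(N-1)$ in that range (with equality only at $N=2$). The factor $\gamma^{-(N-1)}$ becomes exactly $R^{N-1}$, which supplies the power of $R$ claimed in \eqref{bnest:3001}. The exponential $e^{4\cdot 2(N-1)R\cdot\gamma}$ becomes $e^{8(N-1)}\le (e^{8})^N$. Finally $N^{2\min(s_j-a_j-m)}$ is a fixed polynomial power of $N$: since $a_j=\min(s_j-m,1-\ep)$ one checks that $s_j-a_j-m\ge 0$ for every $j$ (it is $0$ if $s_j-m\le 1-\ep$ and $s_j-m-(1-\ep)$ otherwise), so the exponent $c=\min_j(s_j-a_j-m)$ is a nonnegative number depending only on $n,\ep$ and the $s_j$, and $N^{2c}\le (e^{2c})^N$. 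Each of these polynomial- and exponential-in-$N$ factors is absorbed into $C^N$ after enlarging $C$ (now allowed to depend on $n$, $\ep$ and the $s_j$). Assembling the pieces gives $\normrum{\BB_N\vec v}{(\sigma)}^2\le C^N R^{N-1}\prod_1^N\normrum{v_j}{(s_j)}^2$, which is \eqref{bnest:3001}.

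There is no real obstacle here, since the analytic work has already been done in Proposition~\ref{bngamma}; the only points needing a moment's care are the verification that $N/(2(N-1))\le 1$ persists down to $N=2$, the observation that $\min_j(s_j-a_j-m)\ge 0$, and bookkeeping of the quantities on which the final constant is permitted to depend. One could of course retain the sharper factor $N^{2\min(s_j-a_j-m)}$ explicitly, as in Theorem~\ref{mainthm:2}, but the cruder form stated here is all that the analyticity corollary needs.
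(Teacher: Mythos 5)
Your proposal is correct and follows exactly the route the paper takes: the paper's own (one-line) proof is precisely "replace $R$ by $2(N-1)R$, take $\gamma=1/R$, and use Lemma~\ref{lemma:5.2}." Your explicit verification of the resulting factors, in particular that $\bigl(N/(2(N-1))\bigr)^{4N+2}\le 1$ for $N\ge 2$ and that $N^{2\min_j(s_j-a_j-m)}$ and $e^{8(N-1)}$ are absorbed into $C^N$ with $C$ depending on $n$, $\ep$ and the $s_j$, is the correct bookkeeping the paper leaves implicit.
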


\end{document}